\appto\normalsize{\belowdisplayshortskip=\belowdisplayskip}
\newtheorem*{theorem-non}{Theorem}
\newtheorem{theorem}{Theorem}[]
\newtheorem{lemma}{Lemma}
\newtheorem{proposition}{Proposition}
\newtheorem{corollary}{Corollary}
\newtheorem{assumption}{Assumption}
\titlespacing*{\section}{0pt}{0ex}{0ex}
\begin{document}
\title[On the Vector Extension of the Bellman Equations]{A Counterexample and a Corrective to the Vector Extension of the Bellman Equations of a Markov Decision Process} 

\author[Anas Mifrani]{Anas Mifrani*\vspace{-1em}}
\maketitle

\let\thefootnote\relax
\footnotetext{*: Toulouse Mathematics Institute, University of Toulouse, 118 Rte de Narbonne, Toulouse, France} 
\footnotetext{Email address: anas.mifrani@math.univ-toulouse.fr} 
\footnotetext{ORCID: 0009-0005-1373-9028} 

\begin{abstract}
Under the expected total reward criterion, the optimal value of a finite-horizon Markov decision process can be determined by solving the Bellman equations. The equations were extended by D. J. White to processes with vector rewards in 1982. Using a counterexample, we show that the assumptions underlying this extension fail to guarantee its validity. Analysis of the counterexample leads us to articulate a sufficient condition for White's functional equations to be valid. The condition is shown to be true when the policy space has been refined to include a special class of non-Markovian policies, or when the dynamics of the model are deterministic, or when the decision making horizon does not exceed three time steps. The paper demonstrates that, in general, the solutions to White's equations are sets of Pareto efficient policy returns over the refined policy space. Our results are illustrated with an example.
\end{abstract} 

\noindent%
{\it Keywords:}  Vector-valued Markov decision processes; Dynamic programming; Multi-objective optimization; Optimality equations; Pareto fronts.\\

\section{Introduction}
In a seminal paper on vector-valued Markov decision processes, Douglas J. White \citep[\textit{J. Math. Anal. Appl. 89(2)}]{white1982} presents an inductive scheme for determining the Pareto efficient set of policy returns from any period $t$ onward given any initial state $s_t$. The scheme is intended as a generalization of the value iteration algorithm \citep{puterman1994} for scalar Markov decision processes, and is based accordingly on equations reminiscent of Bellman's, though the unknowns in White's case are set-valued rather than real-valued functions. There is abundant reference to White's equations in the technical literature \citep{3, 4, 5, 6, 7, 8}, and their continuing relevance is evidenced by the recent publication of a paper \citep{9} which uses them explicitly as the basis for a novel multi-objective dynamic programming algorithm. Unfortunately, however, the equations are invalid under White's own assumptions. By means of a counterexample, we shall demonstrate that their solution does \textit{not} yield the desired efficient sets, contrary to White's chief claim (see Theorem 2 \citep{white1982}, reproduced below). We shall also see that the proof of this ``theorem" \textsuperscript{1}\footnote{\textsuperscript{1} Throughout this paper, the use of scare quotes around the word ``theorem" indicates that the word is not accurate, since theorems are true statements by definition.} \citep[p. 7]{white1982} relies critically on a rather intuitive, but in fact erroneous, argument.

It is also the purpose of this paper to develop conditions in which the equations are applicable. In particular, it shall be shown that the equations hold when the decision making horizon spans two or three time steps (Section 3), or when the dynamics of the model are deterministic (Section 4), or when the policy space is augmented with a class of non-Markovian policies (Section 5). By analyzing the latter case, we discover that the actual solutions to White's equations are Pareto efficient sets of policy returns over the augmented policy space (Theorem 5, Sec. 5). To our knowledge, this paper carries the first such analysis, and is the first to disprove the said ``theorem".

To make for a relatively self-contained paper, we shall restate White's assumptions \citep{white1982}, changing the notation slightly. Let \(S\) be a finite set of states. For each state \(s \in S\), let \(A_s\) be the set of actions that can be taken in state \(s\); suppose \(A_s\) is compact, and let $A = \cup_{s \in S}A_s$. A decision maker observes the process over \(N\) epochs, \(N \geq 2\). At each \(t = 1, ..., N\), they receive a vector reward \(R_t(s, a) \in \mathbb{R}^{m}\), $m \geq 1$, for selecting action \(a\) in state \(s\); suppose that reward components are continuous on \(A_s\) for all \(s \in S\). Call \(p_t(j | s, a)\) the probability that the system will occupy state \(j\) at epoch \(t+1\) given that action \(a\) was selected in \(s\) at epoch \(t\); suppose it is continuous on \(A_s\). A Markovian decision rule \(d_{t} \in F(S, A)\) dictates the action that should be taken in each state at epoch \(t\). The set of decision rules, $D$, is assumed to be compact. No decision is taken at epoch \(N\), but a terminal state-dependent reward \(R_N(s)\) is generated. A policy specifies the decision rule that should be used at each epoch, and will be identified with its corresponding sequence of decision rules \((d_1, ..., d_{N-1})\). Let $P_W$ denote the set of all policies. 

For any policy \(\pi = (d_1, ..., d_{N-1}) \in P_W\) and any \(t < N\), let \begin{equation}u_t^{\pi}(s) =  \mathbb{E}_{\pi}^{s}[\sum_{i = t}^{N-1}R_{i}(X_i, d_i(X_i)) + R_N(X_N)]\end{equation} be the expected total reward for using \(\pi\) at epochs \(t, t+1, ..., N-1\) if the state at epoch \(t\) is \(s \in S\), where $X_i$ denotes the (random) state at epoch $i$. Since $X_t = s$, it follows from basic probability operations that
\begin{equation}
u_t^{\pi}(s) = R_t(s, d_t(s)) + \sum_{j \in S}p_t(j | s, d_t(s))u_{t+1}^{\pi}(j)
\end{equation}
where, in a way consistent with the definition above,
\begin{equation}
u_N^{\pi}(s) = R_N(s)
\end{equation}
For $t \leq N$, $s \in S$, write $V_t(s) = \bigcup_{\pi \in P_W} \{ u_t^{\pi}(s) \}$. The broad expression ``policy return", where time and state are omitted for brevity, shall refer to any member of $V_t(s)$ for some $t \leq N$ and $s \in S$. 

Define the (Pareto) efficient subset of any set \(X \subseteq \mathbb{R}^{m}\) as

\begin{equation}
e(X) = \{\, x \in X \mid \forall y \in X,\, y \geq x \implies y = x  \,\}
\end{equation}
where $\geq$ denotes the componentwise order on $\mathbb{R}^m$, i.e\ $x \geq y \iff \forall i \in \llbracket 1, m \rrbracket,\, x_i \geq y_i$ for all $x, y \in \mathbb{R}^m$. Elements of $e(X)$ are called efficient, or Pareto optimal, or admissible, or -- in set-theoretic language -- maximal \citep{geoffrion1968proper}.

With this as a background, the ``theorem" states

\begin{theorem-non}\citep{white1982}
For all \(t \leq N\) and \(s \in S\), \(e(V_t(s))\) is the unique solution \(U_t(s)\) to either of the following equations:

\begin{equation}
U_t(s) = e\Biggl(\bigcup_{a \in A_s}\biggl(\{R_t(s, a)\} \bigoplus \sum_{j \in S}p_t(j | s, a)U_{t + 1}(j)\biggr)\Biggr); \begin{array}{l}
t < N\\
\end{array}
\end{equation}
\begin{equation}
U_t(s) = \{R_N(s)\}; \begin{array}{l}
t = N\\
\end{array}
\end{equation}
where for any nonempty sets $A$ and $B$, $A \bigoplus B = \{\, a + b \mid a \in A, b \in B\,\}$. 
\end{theorem-non}

When \(m = 1\), (4) and (5) reduce to the Bellman equations referred to earlier, and the ``theorem" reduces to the correct claim that \(\max_{\pi \in P_W}u_t^{\pi}(s)\), which exists under the aforementioned assumptions \citep[Proposition 4.4.3.]{puterman1994}, is the unique solution for all \(t \leq N\) and \(s \in S\). When $m > 1$, the ``theorem" no longer holds, as the next section shows.
\ \\
\section{A Counterexample}

Consider the stationary vector-valued Markov decision process defined by $N = 4;\,
S = \{s_1, s_2\};\, A_{s_1} = A_{s_2} = A = \{a_1, a_2\};\, p_t(s_1 | s_1, a_1) = \frac{3}{4};\, p_t(s_1 | s_1, a_2) = p_t(s_1 | s_2, a_1) = p_t(s_1 | s_2, a_2) = \frac{1}{2};\, R_t(s_1, a_1) = (11, -5);\, R_t(s_1, a_2) = (9, 5);\, R_t(s_2, a_1) = (5, 5);\, R_t(s_2, a_2) = (5, -10);\, R_N(s_1) = (1, 0);\, R_N(s_2) = (0, 1)$.

There are four decision rules in this model, namely the one that prescribes $a_1$ in $s_1$ and $a_2$ in $s_2$, the one that prescribes $a_2$ in $s_1$ and $a_1$ in $s_2$, and the two that prescribe the same action in both states. This gives rise to a total of $4^{N-1} = 64$ policies. A full search of $V_1(s_1)$ based on (1) and (2) yielded \begin{equation}
e(V_1(s_1)) = \{(26.5, 5.5), (23.5, 15.5), (28.7, -2.0), (25.0, 10.5), (27.6, 0.4), (30.3, -9.0)\}
\end{equation}
whereas successive evaluation of (4) and (5) for all $t \leq N$ produced, at $t = 1$ and $s = s_1$, \begin{multline}
U_1(s_1) = \{(30.3, -9.0), (30.0, -7.8), (29.0, -3.3), (28.7, -2), (27.6, 0.4), (27.3, 1.6), (26.8, 4.2),\\ (26.5, 5.5), (25.8, 6.6), (25.6, 7.9), (25.0, 10.5), (24.1, 12.9), (23.5, 15.5)\}
\end{multline}
Clearly, $U_1(s_1) \neq e(V_1(s_1))$. This concludes the counterexample.

The problem is that for any $t < N$ and $s \in S$, $U_t(s)$ may contain infeasible vectors, that is, total rewards that are unattainable for all policies over the period spanning $t, t+1, ..., N$ given initial state $s$. Put differently, for some $w \in U_t(s)$, we may have $w \notin V_t(s)$. Indeed, in the example above, $(30.0, -7.8) \in U_1(s_1)$, yet a full search revealed no policy $\pi$ such that $u_1^{\pi}(s_1) = (30.0, -7.8)$. If $U_t(s) \not \subseteq V_t(s)$, one obviously cannot have $U_t(s) = e(V_t(s))$, and the ``theorem" fails.     

At the root of the problem is the possibility that, for some $s \in S$ and $t < N$, \begin{equation}F_t(s) := \bigcup_{a \in A_s}\biggl(\{R_t(s, a)\} \bigoplus \sum_{j \in S}p_t(j | s, a)U_{t + 1}(j)\biggr) \not \subseteq V_t(s)\end{equation}
On the page of \citep{white1982} devoted to proving the ``theorem", page 7, we find that Professor White, proceeding on the induction hypothesis that $U_{t+1}(j) = e(V_{t+1}(j))$ for all $j \in S$, for some $t < N$, merely asserts that if $w = R_t(s, a) + \sum_{j \in S}p_t(j | s, a)v_j \in F_t(s)$, with $v_j \in U_{t+1}(j)$, $a \in A_s$, we must have $w \in V_t(s)$. This, of course, need not be true, as we have just illustrated with our counterexample with $w = (30.0, -7.8)$. 

It is therefore important to inquire into the following property, which we call (P): \begin{equation}\forall s \in S,\ \forall t < N,\ F_t(s) \subseteq V_t(s)\end{equation}
Property (P) not only ensures $U_t(s) \subseteq V_t(s)$, but also that White's equations are valid, i.e\ $U_t(s) = e(V_t(s))$ for all $s \in S$ and $t \leq N$. Because the demonstration of this fact is quite lengthy, we defer it to the next section. 

It should be emphasized from the outset that (P) can only be violated when $m > 1$. Recall our earlier comment that when $m = 1$, White's equations coincide with Bellman's, since \begin{equation}e(F_t(s)) = \{\max(F_t(s))\}
\end{equation} for all $t < N$ and $s \in S$, so that we may write $U_t(s) = \{u_t(s)\} \subseteq \mathbb{R}$ for all $s \in S$ and $t \leq N$, where \begin{equation}u_t(s) = \max_{a \in A_s}\ [R_t(s, a) + \sum_{j \in S}p_t(j | s, a)u_{t+1}(j)]\end{equation} for all $s \in S$ if $t < N$, and \begin{equation}u_N(s) = R_N(s)\end{equation} for all $s \in S$. In fact, for each $s \in S$, $t < N$ and $a \in A_s$, one can demonstrate the existence of a policy $\pi(s, a, t) \in P_W$ such that \begin{equation}R_t(s, a) + \sum_{j \in S}p_t(j | s, a)u_{t+1}(j) = u_t^{\pi(s, a, t)}(s),\end{equation}
thus ensuring $F_t(s) \subseteq V_t(s)$ and therefore (P). Such a $\pi(s, a, t)$ can be constructed in two steps. First, construct a policy $\pi^{*} = (d_1^{*}, ..., d_{N-1}^{*}) \in P_W$ as follows: for each $t < N$, starting at $t = N-1$ then decreasing $t$, evaluate Equation (12) for each $s \in S$, then choose a maximizing action $a^{*} \in A_s$ and let $d_t^{*}(s) = a^{*}$. This policy satisfies $u_t(s) = u_t^{\pi}(s)$ for all $s \in S$ and $t \leq N$. Second, for each $s \in S$, $t < N$ and $a \in A_s$, choose $\pi(s, a, t)$ to be any policy that prescribes $a$ in $s$ at time $t$ and uses $\pi^{*}$ from $t+1$ onward, i.e any policy $(d_1, ..., d_{N-1})$ of the form \begin{equation}d_t(s) = a;\ d_{k}(j) = d_{k}^{*}(j), \forall k \in \llbracket t+1, N-1 \rrbracket,\, \forall j \in S\,\end{equation}
Clearly, for any such policy, we have $u_t^{\pi(s, a, t)}(s) = R_t(s, a) + \sum_{j \in S}p_t(j | s, a)u_{t+1}(j)$, hence property (P), hence the validity of the Bellman equations.

We have demonstrated, then, that (P) is a property of scalar Markov decision processes, whereas it need not be a property of vector-valued Markov decision processes with $m > 1$. In the sequel, we shall develop two overlapping classes of models which possess (P) as their property. These are:
\begin{enumerate}
\item Deterministic dynamic programs, i.e vector-valued Markov decision processes where actions, conditional on the present state, determine the next state with certainty.
\item Vector-valued Markov decision processes where the definition of ``decision rule" has been refined to include a broader range of rules than that considered by White. 
\end{enumerate}

Before treating these cases (Sections 4 and 5, respectively), we shall first justify our claim that (P) is sufficient to ensure the validity of the White equations (Section 3). We shall adopt the same method of proof as White on page 7 of \citep{white1982}, namely induction on $t$ and appeal to a lemma that will be introduced in due course. Note that since we shall be dealing with a different notion of policy in our treatment of the second case, (P) shall have to be recast accordingly, though this is a minor change, and the property's tenor as well as implication for the White equations shall remain the same. All assumptions made in the succeeding sections supplement those set forth in the Introduction, unless it is stated explicitly that a new assumption repeals an old one. Finally, it should by now be clear that in carrying out this investigation, we have an $m > 1$ in mind, though all the results presented in the following also hold when $m = 1$. 
\ \\
\section{On (P) as a Sufficient Condition for the Validity of the White Equations}
As a prelude to showing that (P) implies $U_t(s) = e(V_t(s))$ for all $s \in S$ and $t \leq N$, where $U_t(s)$ is given by Equations (4) and (5), we borrow the following lemma from \cite{white1982}:

\begin{lemma}
\citep[Lemma 2]{white1982} Let $t = 1, ..., N$ and $s \in S$. For each $u \in V_t(s)$, there is a $v \in e(V_t(s))$ such that $v \geq u$.
\end{lemma}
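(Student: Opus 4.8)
The plan is to prove the statement by finite downward induction on $t$. For $t = N$, we have $V_N(s) = \{R_N(s)\}$, a singleton, so $e(V_N(s)) = V_N(s)$ and the claim is trivial: given $u = R_N(s) \in V_N(s)$, take $v = u$.

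For the inductive step, suppose the claim holds at $t+1$ for every $j \in S$, and let $u \in V_t(s)$. Then $u = u_t^{\pi}(s)$ for some $\pi = (d_1, \dots, d_{N-1}) \in P_W$, and by Equation (2),
\begin{equation*}
u = R_t(s, d_t(s)) + \sum_{j \in S} p_t(j \mid s, d_t(s))\, u_{t+1}^{\pi}(j).
\end{equation*}
Each $u_{t+1}^{\pi}(j) \in V_{t+1}(j)$, so by the induction hypothesis there is $v_j \in e(V_{t+1}(j))$ with $v_j \geq u_{t+1}^{\pi}(j)$. Since each $v_j$ is itself a policy return, $v_j = u_{t+1}^{\pi_j}(j)$ for some policy $\pi_j$. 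The natural candidate for $v$ is obtained by splicing: use $d_t$ at epoch $t$ in state $s$, and thereafter, from epoch $t+1$ onward, follow $\pi_j$ whenever the state at epoch $t+1$ is $j$. Because decision rules at a given epoch depend only on the current state, this splicing is a legitimate element of $P_W$ — here is where it matters that we have the freedom to let the epoch-$t+1$ decision rule pick out, for each $j$, the action dictated by $\pi_j$; one then continues consistently. Call the resulting policy $\pi'$. Its return satisfies $u_t^{\pi'}(s) = R_t(s, d_t(s)) + \sum_{j} p_t(j \mid s, d_t(s))\, v_j \geq u$ by monotonicity of the sum (all coefficients $p_t(j \mid s, d_t(s))$ are nonnegative). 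Thus $u' := u_t^{\pi'}(s) \in V_t(s)$ and $u' \geq u$.

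Finally, to land inside $e(V_t(s))$ rather than merely above $u$, I would invoke compactness: under White's assumptions the state space is finite, $A_s$ is compact, rewards and transition probabilities are continuous, and the decision-rule set $D$ is compact, so $V_t(s)$ is a compact subset of $\mathbb{R}^m$ (it is the continuous image of the compact policy space, or can be built up inductively as a finite union of compact sets). A standard fact about the componentwise order on $\mathbb{R}^m$ then gives that every point of a nonempty compact set is dominated by a Pareto-efficient point of that set: starting from $u' \in V_t(s)$, the ``upper set'' $\{x \in V_t(s) : x \geq u'\}$ is nonempty and compact, and maximizing the sum of coordinates over it yields a point $v$ which, one checks, lies in $e(V_t(s))$ and satisfies $v \geq u' \geq u$.

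The main obstacle is the splicing construction: one must be careful that stitching together the tail policies $\pi_j$ across the different possible states $j$ at epoch $t+1$ produces a well-defined Markovian policy whose return decomposes exactly as claimed. This is precisely the point the paper has flagged as delicate — White's original argument stumbled on an analogous interchange — so the proof must make explicit that, at each epoch $k \ge t+1$, the combined decision rule $d_k'(\cdot)$ can be defined state-by-state without conflict, and that Equation (2) then propagates the domination inequality upward from epoch $N$ to epoch $t$. The compactness step, by contrast, is routine once $V_t(s)$ is known to be compact.
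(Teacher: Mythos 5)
The paper does not prove this lemma itself; it imports it from White (1982), and the method it uses for the analogous Lemma 3 (for the sets $V'_t(h_t)$) is exactly your second paragraph: establish that the set of returns is compact and that the upper sections $S(u)=\{x : x\geq u\}$ are closed, then invoke White's 1977 kernel theorem (equivalently, maximize $\sum_i x_i$ over the nonempty compact set $\{x\in V_t(s): x\geq u\}$ and check the maximizer is efficient, using transitivity of $\geq$). That part of your argument is correct and complete, granted the compactness of $V_t(s)$, which White (1982) supplies and which the paper reproves for $V'_t(h_t)$ via its Lemma 4. Note that this argument applies \emph{directly to $u$}: set $u'=u$ and you are done. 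Your entire first paragraph is logically superfluous.

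It is also wrong, and wrong in precisely the way this paper is written to expose. You assert that the spliced policy --- ``use $d_t$ at epoch $t$, then from epoch $t+1$ onward follow $\pi_j$ whenever the state at epoch $t+1$ is $j$'' --- ``is a legitimate element of $P_W$'' because decision rules depend only on the current state. That is the reason it \emph{fails}, not the reason it works. At epoch $t+1$ you can indeed set $d'_{t+1}(j)=d^{\pi_j}_{t+1}(j)$, but at any epoch $k>t+1$ a Markovian decision rule $d'_k$ must assign a single action to each state $j'$, whereas ``following $\pi_j$'' requires $d'_k(j')=d^{\pi_j}_k(j')$ where $j$ is the state that was visited at epoch $t+1$ --- information a Markovian rule cannot retain. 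If two states $j_1\neq j_2$ both lead to $j'$ with positive probability and $\pi_{j_1},\pi_{j_2}$ disagree at $j'$, no element of $P_W$ realizes the splice; this is exactly why the counterexample of Section 2 produces the infeasible vector $(30.0,-7.8)$, and why Section 5 must enlarge $P_W$ to the history-dependent class $P$ before such splices become admissible (Proposition 4). Delete the induction, apply the compactness argument to $u$ itself, and your proof coincides with the paper's route.
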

This lemma generalizes the fact that for a partially ordered set $(X, \succeq)$ that admits a maximum, we have $\max(X) \succeq x$ for all $x \in X$. Actually, if $V_t(s)$ does admit a maximum, such as is the case when $m = 1$, the lemma says precisely that $\max(V_t(s)) \geq u$ for all $u \in V_t(s)$. An equivalent statement is ``for all $t \leq N$, $s \in S$, $e(V_t(s))$ is the \textit{kernel} of $V_t(s)$ with respect to $\geq$", where ``kernel" is the decision-theoretic term for the unique antichain\textsuperscript{2}\footnote{\textsuperscript{2} An antichain is a subset of a partially ordered set in which no elements are comparable. Formally, $K \subseteq (X, \succeq)$ is an antichain in $X$ if for all $x, y \in K$, $x \not \succeq y$ and $y \not \succeq x$.} $K$ in a partially ordered set $(X, \succeq)$ with the property that for all $x \in X$, there exists $y \in K$ with $y \succeq x$ \citep{white1977kernels}.  

The next lemma will also be useful in proving our result. Because it can be inferred directly from the definition of $\geq$, we state it without proof:

\begin{lemma}
Let $p_1, ..., p_n$ be $n \geq 1$ nonnegative reals and $x_1, ..., x_n, y_1, ..., y_n \in \mathbb{R}^m$. Let $x, y, z$ be any vectors in $\mathbb{R}^m$. Then
\begin{enumerate}
\item If $x_i \geq y_i$ for all $i = 1, ..., n$, we have $\sum_{i = 1}^{n}p_ix_i \geq \sum_{i = 1}^{n}p_iy_i$;
\item $x \geq y$ implies $z + x \geq z + y$.
\end{enumerate}
\end{lemma}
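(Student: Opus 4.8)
The plan is to reduce both statements to elementary facts about the order on $\mathbb{R}$ by arguing one coordinate at a time, exploiting that, by definition, $u \geq v$ in $\mathbb{R}^m$ is nothing but the conjunction of the $m$ scalar inequalities $u_k \geq v_k$, $k \in \llbracket 1, m \rrbracket$.

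For part (1) I would fix an arbitrary coordinate $k \in \llbracket 1, m \rrbracket$ and read off from the hypothesis $x_i \geq y_i$ the scalar inequality $(x_i)_k \geq (y_i)_k$ for each $i = 1, \ldots, n$. Because $p_i \geq 0$, multiplication by $p_i$ preserves $\leq$ on $\mathbb{R}$, so $p_i (x_i)_k \geq p_i (y_i)_k$; summing these $n$ inequalities (by an obvious induction on $n$, using monotonicity of scalar addition) gives $\sum_{i=1}^{n} p_i (x_i)_k \geq \sum_{i=1}^{n} p_i (y_i)_k$, which is precisely the $k$-th coordinate inequality for $\sum_{i=1}^{n} p_i x_i \geq \sum_{i=1}^{n} p_i y_i$. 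Since $k$ was arbitrary, all $m$ coordinate inequalities hold and the claim follows. Part (2) is handled in the same way: for each $k$, $x \geq y$ yields $x_k \geq y_k$, whence $z_k + x_k \geq z_k + y_k$ by monotonicity of addition on $\mathbb{R}$, and ranging over $k$ gives $z + x \geq z + y$.

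There is no genuine obstacle here — the lemma is a bookkeeping exercise, which is why it is stated without proof — but the one point deserving attention is the nonnegativity assumption on the $p_i$ in part (1): a single negative weight would reverse the corresponding scalar inequality and the coordinatewise summation would break, which is exactly why the hypothesis is phrased as it is. With that caveat noted, the whole argument is a direct unwinding of the definition of $\geq$.
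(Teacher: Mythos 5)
Your argument is correct and is exactly the coordinatewise unwinding of the definition of $\geq$ that the paper has in mind when it states the lemma without proof ("it can be inferred directly from the definition of $\geq$"). Nothing further is needed.
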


We may now state our result.
\begin{proposition}
(P) implies $U_t(s) = e(V_t(s))$ for all $s \in S$ and $t \leq N$. 
\end{proposition}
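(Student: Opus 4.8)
The plan is to imitate White's scheme but to insert the hypothesis (P) at exactly the point where his argument breaks down — the unjustified claim that every vector assembled from the equations is feasible. I would argue by backward induction on $t$, from $t = N$ down to $t = 1$. For the base case $t = N$, equations (3) and (6) give $V_N(s) = \{R_N(s)\} = U_N(s)$, and a singleton is its own efficient set, so $U_N(s) = e(V_N(s))$ for every $s \in S$.

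For the inductive step, fix $t < N$ and suppose $U_{t+1}(j) = e(V_{t+1}(j))$ for every $j \in S$. Fix $s \in S$; by (4) one has $U_t(s) = e(F_t(s))$, so it suffices to show $e(F_t(s)) = e(V_t(s))$. I would establish two facts. \textbf{(i)} $F_t(s) \subseteq V_t(s)$: this is precisely property (P). \textbf{(ii)} Every element of $V_t(s)$ is dominated, with respect to $\geq$, by some element of $F_t(s)$. To see (ii), take $u \in V_t(s)$, say $u = u_t^{\pi}(s)$ for a policy $\pi = (d_1, \dots, d_{N-1}) \in P_W$, and expand via (2): $u = R_t(s, d_t(s)) + \sum_{j \in S} p_t(j \mid s, d_t(s)) u_{t+1}^{\pi}(j)$. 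Each $u_{t+1}^{\pi}(j)$ lies in $V_{t+1}(j)$, so by Lemma 1 there is $v_j \in e(V_{t+1}(j))$ with $v_j \geq u_{t+1}^{\pi}(j)$; by the induction hypothesis, $v_j \in U_{t+1}(j)$. Writing $a = d_t(s)$, the vector $w = R_t(s, a) + \sum_{j \in S} p_t(j \mid s, a) v_j$ belongs to $\{R_t(s, a)\} \bigoplus \sum_{j \in S} p_t(j \mid s, a) U_{t+1}(j) \subseteq F_t(s)$, and two applications of Lemma 2 — part (1) to the weighted sums, then part (2) to add $R_t(s,a)$ — give $w \geq u$.

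It then remains to invoke an elementary order-theoretic fact: if $F \subseteq V \subseteq \mathbb{R}^m$ and every element of $V$ is dominated (w.r.t. $\geq$) by some element of $F$, then $e(F) = e(V)$. I would prove both inclusions directly. If $w \in e(F)$ then $w \in V$; any $u \in V$ with $u \geq w$ is dominated by some $w' \in F \subseteq V$, and $w' \geq u \geq w$ forces $w' = w$ by efficiency of $w$ in $F$, hence $u = w$ by antisymmetry of $\geq$, so $w \in e(V)$. Conversely, if $v \in e(V)$, choose $w \in F$ with $w \geq v$; since $w \in F \subseteq V$, efficiency of $v$ gives $w = v$, so $v \in F$, and any $w'' \in F \subseteq V$ with $w'' \geq v$ equals $v$ by efficiency of $v$ in $V$, so $v \in e(F)$. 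Applying this with $F = F_t(s)$ and $V = V_t(s)$ yields $U_t(s) = e(F_t(s)) = e(V_t(s))$ and closes the induction.

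The argument is short, and I expect the only delicate points to be the following. In step (ii) the induction hypothesis must be used in the exact form $e(V_{t+1}(j)) = U_{t+1}(j)$, so that the dominating continuations $v_j$ furnished by Lemma 1 genuinely lie in the sets $U_{t+1}(j)$ out of which $F_t(s)$ is assembled — this is what lets $w$ land in $F_t(s)$ rather than merely in some abstract combination of efficient sets. And property (P) is invoked exactly once, to obtain $F_t(s) \subseteq V_t(s)$ in step (i); the counterexample of Section 2 shows that without this inclusion $e(F_t(s))$ and $e(V_t(s))$ can genuinely differ, so (P) is doing essential work and cannot be dispensed with.
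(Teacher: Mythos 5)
Your proof is correct and follows essentially the same route as the paper's: backward induction on $t$, with Lemma 1 supplying dominating efficient continuation values, Lemma 2 giving monotonicity of the convex combination, and (P) invoked exactly where White's argument breaks down. The only difference is presentational — you factor the two inclusions into a standalone order-theoretic fact (if $F \subseteq V$ and every element of $V$ is dominated by an element of $F$, then $e(F) = e(V)$), whereas the paper proves $e(V_t(s)) \subseteq U_t(s)$ and $U_t(s) \subseteq e(V_t(s))$ directly, re-invoking Lemma 1 at level $t$ for the second inclusion where you instead reuse your domination property (ii).
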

\begin{proof}
Suppose (P) is true. Proceed by induction on $t$. For $t = N$, we have, independently of (P), $V_N(s) = \{R_N(s)\} = U_N(s)$, and thus $U_N(s) = e(V_N(s))$ for all $s \in S$. Suppose $U_{t+1}(s) = e(V_{t+1}(s))$ for all $s \in S$, for some $t < N$. 

Let $s \in S$ and $v \in e(V_t(s))$. We shall prove that $v \in U_t(s)$. Since $v \in V_t(s)$, there exists a policy $\pi = (d_1, ..., d_{N-1})$ such that $v = R_t(s, d_t(s)) + \sum_{j \in S}p_t(j | s, d_t(s))u_{t+1}^{\pi}(j)$. By Lemma 1, there is, for all $j \in S$, a policy $\pi_j$ such that $u_{t+1}^{\pi_j}(j) \geq u_{t+1}^{\pi}(j)$ and $u_{t+1}^{\pi_j}(j) \in e(V_{t+1}(j))$. Let $w = R_t(s, d_t(s)) + \sum_{j \in S}p_t(j | s, d_t(s))u_{t+1}^{\pi_j}(j)$. By the induction hypothesis, $u_{t+1}^{\pi_j}(j) \in U_{t+1}(j)$ for all $j \in S$, hence $w \in F_t(s)$. Furthermore, $w \geq v$ by Lemma 2, and $w \in V_t(s)$ by (P). But $v \in e(V_t(s))$, thus $v = w$. Therefore, $v \in F_t(s)$. Ergo, because $v \in e(V_t(s))$ and $F_t(s) \subseteq V_t(s)$ (P), we have $v \in e(F_t(s)) = U_t(s)$. This proves that $e(V_t(s)) \subseteq U_t(s)$.

To show the converse inclusion, let $u \in U_t(s)$. We may write $u = R_t(s, a) + \sum_{j \in S}p_t(j | s, a)v_j$ for some $a \in A$ and $v_j \in U_{t+1}(j)$ for all $j \in S$. From (P), $u \in V_t(s)$. Suppose that $u \notin e(V_t(s))$. Then there exists, applying Lemma 1, $v \in e(V_t(s)) \subseteq U_t(s)$ such that $v \geq u$ and $v \neq u$. This contradicts the fact that $u \in U_t(s)$, and shows that $u \in e(V_t(s))$. Consequently, $U_t(s) \subseteq e(V_t(s))$.

Because $s$ is arbitrary, we have established $U_t(s) = e(V_t(s))$ for any $s \in S$ and $t = 1, ..., N$.
\end{proof}

Before examining the case of deterministic dynamic programs, we argue that (P) is necessarily true when $N = 2$ or $3$.

\begin{proposition}
Suppose $N = 2$ or 3. Then (P) is true.
\end{proposition}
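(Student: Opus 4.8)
The plan is to verify property (P) directly from the definition of $F_t$, exploiting the fact that with $N \in \{2, 3\}$ there is at most one epoch strictly between the first and the last at which a decision must be taken; consequently the ingredients that get assembled into an element of $F_t(s)$ can always be packaged into a single Markovian policy, with no clash between the actions that policy is required to prescribe at different states. Concretely, for each $s \in S$ I would take an arbitrary $w \in F_t(s)$, unfold it using the White equations and the one-step decomposition (2) of $u_t^{\pi}$, and then exhibit a policy $\pi \in P_W$ with $u_t^{\pi}(s) = w$, which shows $w \in V_t(s)$.

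I would begin with the epoch $t = N-1$, the only relevant value of $t$ when $N = 2$ and one of the two when $N = 3$. Since $U_N(j) = \{R_N(j)\}$ for every $j$, the set $F_{N-1}(s)$ is exactly $\bigl\{\, R_{N-1}(s,a) + \sum_{j \in S} p_{N-1}(j\,|\,s,a)\, R_N(j) \;:\; a \in A_s \,\bigr\}$. Given $w$ in this set together with a witnessing action $a$, let $\pi$ be any policy with $d_{N-1}(s) = a$; then (2), combined with $u_N^{\pi}(j) = R_N(j)$, yields $u_{N-1}^{\pi}(s) = w$, so $w \in V_{N-1}(s)$. This settles the case $N = 2$ outright, and also the epoch $t = 2$ when $N = 3$.

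For $N = 3$ it remains to prove $F_1(s) \subseteq V_1(s)$. Let $w \in F_1(s)$; by the definition of $F_1$ we may write $w = R_1(s,a) + \sum_{j \in S} p_1(j\,|\,s,a)\, v_j$ for some $a \in A_s$ and some choice of $v_j \in U_2(j)$, one for each $j \in S$. Because $U_2(j) = e(F_2(j)) \subseteq F_2(j)$ and, by the previous paragraph, $F_2(j) = \bigl\{\, R_2(j,b) + \sum_{k \in S} p_2(k\,|\,j,b)\, R_N(k) \;:\; b \in A_j \,\bigr\}$, each $v_j$ is realised by some action $b_j \in A_j$. The decisive point is that the assignment $j \mapsto b_j$ is itself a legitimate decision rule $d_2 \in F(S,A)$: at the single intermediate epoch a decision rule merely has to name one action per state, and these names may be chosen independently across states, so there is nothing to reconcile. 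Taking $\pi = (d_1, d_2)$ with $d_1(s) = a$ (and $d_1$ defined arbitrarily on the remaining states) and $d_2(j) = b_j$ for every $j \in S$, the recursion (2) gives first $u_2^{\pi}(j) = v_j$ for each $j$ and then $u_1^{\pi}(s) = w$; hence $w \in V_1(s)$. Since $s \in S$ was arbitrary, (P) holds.

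I expect the only genuine obstacle to be recognising and justifying that last ``nothing to reconcile'' step, rather than any calculation: one must be sure that the per-state selections $v_j$ --- equivalently, the actions $b_j$ --- never have to agree across distinct predecessor states. With two or three epochs this is automatic; with $N \ge 4$ the inner selections would themselves be unpacked into choices depending jointly on the current state and on which predecessor one arrived from, and a single Markovian decision rule cannot in general honour all of them at once. This is precisely the mechanism behind the counterexample of Section 2, and the reason the argument does not extend beyond $N = 3$.
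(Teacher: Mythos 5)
Your proof is correct and follows essentially the same route as the paper: handle $t=N-1$ by realising each element of $F_{N-1}(s)$ with any policy prescribing the witnessing action, then for $N=3$, $t=1$, realise each $v_j \in U_2(j)$ by a time-2 action and assemble these into a single Markovian decision rule $d_2$. The only cosmetic difference is that you extract the actions $b_j$ directly from $F_2(j)$, whereas the paper first notes $U_2(j) \subseteq V_2(j)$ and picks witnessing policies $\pi_j$; your closing remark on why the construction breaks down for $N \geq 4$ matches the paper's diagnosis exactly.
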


\begin{proof}
Suppose $N = 2$. For $t = 1$, we have that for all $s \in S$, for each $v \in F_1(s)$, $v = R_1(s, a) + \sum_{j \in S}p_1(j | s, a)R_2(j)$ for some $a \in A_s$. Thus $v = u_1^{\pi}(s) \in V_1(s)$ for any policy $\pi \in P_W$ that uses $a$ in $s$ at time 1. Therefore, (P) is true.

Suppose $N = 3$. For $t = 2$, we have that for all $s \in S$, for each $v \in F_2(s)$, $v = R_2(s, a) + \sum_{j \in S}p_2(j | s, a)R_3(j)$ for some $a \in A_s$. Thus $v = u_2^{\pi}(s)$ for any policy $\pi \in P_W$ that prescribes $a$ in $s$ at time 2. For this reason $F_2(s) \subseteq V_2(s)$, and therefore $e(F_2(s)) = U_2(s) \subseteq V_2(s)$, for all $s \in S$.

For $t = 1$, for each $s \in S$ and $v \in F_1(s)$, there exists an $a \in A_s$ such that $v = R_1(s, a) + \sum_{j \in S}p_1(j | s, a)v_j$ for some $v_j \in U_2(j) \subseteq V_2(j)$ for all $j \in S$. For each $j \in S$, there exists a policy $\pi_j \in P_W$ such that $v_j = u_2^{\pi_j}(j)$. Then $v = R_1(s, a) +  \sum_{j \in S}p_1(j | s, a)u_2^{\pi_j}(j) = u_1^{\pi}(s)$ for any policy $\pi \in P_W$ which prescribes $a$ in $s$ at time 1, then uses $\pi_j$ if the state at time 2 is $j$. Thus $F_1(s) \subseteq V_1(s)$ for all $s \in S$.  
\end{proof}

We now know, by Proposition 2, that the White equations are valid when $N = 2$ or $3$.

\begin{corollary}
If $N = 2$ or $3$, then $e(U_t(s)) = V_t(s)$ for all $t \leq N$ and $s \in S$.
\end{corollary}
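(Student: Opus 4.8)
The plan is to read the corollary off directly from the two propositions just proved, so the argument will be very short. First I would apply Proposition 2: its hypothesis is exactly that $N = 2$ or $N = 3$, and its conclusion is that property (P) holds, i.e.\ $F_t(s) \subseteq V_t(s)$ for every $s \in S$ and every $t < N$, where $F_t(s)$ is the set defined in (9).

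Next I would feed this into Proposition 1, whose sole hypothesis is (P). Its conclusion is that $U_t(s) = e(V_t(s))$ for every $s \in S$ and every $t \leq N$, where $U_t(s)$ denotes the (unique) solution of White's equations (4) and (5). Applying the operator $e$ to both sides and using that $e$ is idempotent, i.e.\ $e(e(X)) = e(X)$ for every $X \subseteq \mathbb{R}^m$ — which is immediate from the definition, since $e(X) \subseteq X$ and the defining condition for membership in $e(X)$ is already quantified over all of $X$, a fortiori over $e(X)$ — one obtains $e(U_t(s)) = e(e(V_t(s))) = e(V_t(s)) = U_t(s)$, which is the asserted identity. In short, the corollary is nothing more than the specialization of Proposition 1 to the horizons covered by Proposition 2.

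I do not expect any genuine obstacle here. The only point worth a line of justification, so that the statement is not vacuous, is that White's equations admit a well-defined solution $U_t(s)$ at all; this follows by backward induction on $t$, since $U_N(s) = \{R_N(s)\}$ is a singleton, and if $U_{t+1}(j)$ is a nonempty bounded set for each $j \in S$ then $F_t(s)$ is a finite union over $a \in A_s$ of translates and nonnegative scalings of such sets, so $U_t(s) = e(F_t(s))$ is again well defined under the compactness and continuity assumptions of the Introduction.
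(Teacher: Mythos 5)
Your proof is correct and is exactly the paper's (implicit) argument: the corollary is obtained by feeding Proposition~2 into Proposition~1, and the paper offers no further justification beyond the sentence preceding the corollary. One caveat: the printed statement $e(U_t(s)) = V_t(s)$ is evidently a typo for $U_t(s) = e(V_t(s))$ (compare Corollary~2), and what your chain $e(U_t(s)) = e(e(V_t(s))) = e(V_t(s)) = U_t(s)$ actually establishes is this intended identity rather than the literal one --- which would be false whenever $V_t(s)$ is not an antichain --- so you should flag the discrepancy rather than call your conclusion ``the asserted identity.''
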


\textit{In what follows, we assume that $N > 2$.} This makes subsequent inductive proofs less tedious by eliminating consideration of $N = 2$. 
\ \\
\section{Deterministic Dynamic Programs}

Having established that property (P) ensures the validity of the White equations, we may now use it to show that these equations are applicable to vector-valued Markov decision processes with deterministic dynamics. By ``deterministic dynamics" we mean specifically that

\begin{assumption}
For each $s \in S$, $a \in A_s$ and $t = 1, ..., N-1$, there exists $s^{+} \in S$ such that $p_t(s^{+} | s, a) = 1$. 
\end{assumption}

We claim that (P) is true under Assumption 1.

\begin{theorem}
Under Assumption 1, (P) is true, that is, \begin{equation*}\forall s \in S,\ \forall t < N,\ F_t(s) \subseteq V_t(s)\end{equation*}
\end{theorem}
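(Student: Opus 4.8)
The plan is to prove (P) by induction on $t$, running downward from $t = N$, exactly as in the proof of Proposition 1, but exploiting determinism to show that every vector arising on the right-hand side of White's equation is actually realized by a concrete policy. The base case is vacuous: for $t = N-1$, any $v \in F_{N-1}(s)$ has the form $v = R_{N-1}(s,a) + \sum_{j} p_{N-1}(j\,|\,s,a) R_N(j)$ for some $a \in A_s$, and since $U_N(j) = \{R_N(j)\}$ this is just $u_{N-1}^\pi(s)$ for any policy prescribing $a$ in $s$ at time $N-1$. So $F_{N-1}(s) \subseteq V_{N-1}(s)$ with no use of Assumption 1. For the inductive step, assume $F_{k}(j) \subseteq V_{k}(j)$ for all $j \in S$ and all $k$ with $t < k < N$; by Proposition 1 (or rather its proof technique) this gives $U_{k}(j) = e(V_{k}(j)) \subseteq V_{k}(j)$ for those $k$. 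Now take $v \in F_t(s)$, so $v = R_t(s,a) + \sum_{j \in S} p_t(j\,|\,s,a) v_j$ with $v_j \in U_{t+1}(j)$.

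Here is where Assumption 1 does the real work. By determinism, there is a unique $s^+ \in S$ with $p_t(s^+ \,|\, s, a) = 1$, and $p_t(j\,|\,s,a) = 0$ for $j \neq s^+$; hence the sum collapses to a single term and $v = R_t(s,a) + v_{s^+}$, where $v_{s^+} \in U_{t+1}(s^+) \subseteq V_{t+1}(s^+)$. Thus there is a policy $\pi' = (d_1', \dots, d_{N-1}')$ with $u_{t+1}^{\pi'}(s^+) = v_{s^+}$. Define a policy $\pi = (d_1, \dots, d_{N-1})$ by setting $d_t(s) = a$, letting $d_k(j) = d_k'(j)$ for all $k \geq t+1$ and all $j \in S$, and choosing $d_t(j)$ arbitrarily for $j \neq s$ and $d_k(j)$ arbitrarily for $k < t$. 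Then by Equation (2),
\begin{equation*}
u_t^\pi(s) = R_t(s, d_t(s)) + \sum_{j \in S} p_t(j\,|\,s,a) u_{t+1}^{\pi}(j) = R_t(s,a) + u_{t+1}^{\pi}(s^+) = R_t(s,a) + v_{s^+} = v,
\end{equation*}
where the step $u_{t+1}^{\pi}(s^+) = u_{t+1}^{\pi'}(s^+)$ holds because $u_{t+1}^{\pi}$ depends only on the decision rules from epoch $t+1$ onward, which $\pi$ inherits from $\pi'$. Therefore $v \in V_t(s)$, and since $s$ and $v$ were arbitrary, $F_t(s) \subseteq V_t(s)$, completing the induction.

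The only subtlety — and it is mild — is making sure the inductive hypothesis is framed correctly: we need $U_{t+1}(j) \subseteq V_{t+1}(j)$, which follows from $F_{t+1}(j) \subseteq V_{t+1}(j)$ since $U_{t+1}(j) = e(F_{t+1}(j)) \subseteq F_{t+1}(j) \subseteq V_{t+1}(j)$, with the case $t+1 = N$ handled by $U_N(j) = \{R_N(j)\} = V_N(j)$. I expect no genuine obstacle here: unlike the stochastic case, where a vector $v_j$ realized by policy $\pi_j$ in state $j$ and a vector $v_{j'}$ realized by a \emph{different} policy $\pi_{j'}$ in state $j'$ cannot in general be realized simultaneously by one policy (this is precisely the flaw in White's argument exposed by the counterexample), determinism ensures only one successor state is ever reached, so there is never a conflict between the continuation policies needed for distinct successors. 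That is the entire point of the theorem, and the proof should be short.
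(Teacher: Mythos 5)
Your proof is correct and follows essentially the same route as the paper's: downward induction on $t$, with Assumption 1 collapsing the sum $\sum_{j}p_t(j\,|\,s,a)v_j$ to the single term $v_{s^{+}}$, which by the induction hypothesis (via $U_{t+1}(s^{+}) = e(F_{t+1}(s^{+})) \subseteq F_{t+1}(s^{+}) \subseteq V_{t+1}(s^{+})$) is realized by some policy $\pi'$ that you then prepend with action $a$ at $(s,t)$. Your observations that the base case needs no determinism and that the chain of inclusions through $F_{t+1}(j)$ suffices (no appeal to Proposition 1 is needed) are accurate refinements, but the argument is the same as the paper's.
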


\begin{proof}
We proceed by induction on $t$. For $t = N-1$, for any $s \in S$ and $a \in A_s$, by letting $f = R_{N-1}(s, a) + \sum_{j \in S}p_{N-1}(j | s, a)v_j$ where $v_j \in U_{N}(j) = \{R_N(j)\}$ for all $j \in S$, we have that $f = R_{N-1}(s, a) + v_{s^{+}} = R_{N-1}(s, a) + R_N(s^{+})$ where $s^{+} \in S$ is the state such that $p_{N-1}(s^{+} | s, a) = 1$, and thus $f = u_{N-1}^{\pi}(s)$ for any policy $\pi$ that prescribes $a$ for $s$ at time $N-1$. As a result, $F_{N-1}(s) \subseteq V_{N-1}(s)$ for all $s \in S$.

Suppose that for all $s \in S$, $F_{t+1}(s) \subseteq V_{t+1}(s)$ for some $t < N - 1$. Let $s \in S$, and let $f = R_t(s, a) + \sum_{j \in S}p_t(j | s, a)v_j$ where $v_j \in U_{t+1}(j) = e(F_{t+1}(j))$ for all $j \in S$. By Assumption 1, we may write $f = R_t(s, a) + v_{s^{+}}$, where $s^{+} \in S$ satisfies $p_{t}(s^{+} | s, a) = 1$. By the induction hypothesis, $v_{s^{+}} \in V_{t+1}(s)$, hence there exists a policy $\pi$ such that $f = R_t(s, a) + u_{t+1}^{\pi}(j)$. Let $\pi'$ be any policy that prescribes $a$ in $s$ at time $t$ then uses $\pi$ from $t+1$ onward. We have $f = u_{t}^{\pi'}(s)$, so $f \in V_t(s)$. Consequently, $F_t(s) \subseteq V_t(s)$ for all $s \in S$.

In summary, for all $t < N$ and $s \in S$, $F_t(s) \subseteq V_t(s)$.
\end{proof}

From Proposition 1 follows the validity of the White equations under Assumption 1.

\begin{corollary}
Under Assumption 1, the White equations are valid, i.e\ $U_t(s) = e(V_t(s))$ for all $t \leq N$ and $s \in S$.
\end{corollary}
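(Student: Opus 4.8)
The plan is short, because this corollary is an immediate consequence of two results already proved. The strategy is simply to chain them: Theorem 1 tells us that Assumption 1 implies property (P), i.e.\ $F_t(s) \subseteq V_t(s)$ for all $s \in S$ and $t < N$; Proposition 1 tells us that (P) implies $U_t(s) = e(V_t(s))$ for all $s \in S$ and $t \leq N$. Composing these two implications gives exactly the statement of the corollary.

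Concretely, I would proceed in two steps. First, invoke Theorem 1 to conclude that, under Assumption 1, property (P) holds. Second, apply Proposition 1 with this (P) in hand to obtain $U_t(s) = e(V_t(s))$ for every $t \leq N$ and $s \in S$, where $U_t(s)$ is the solution of Equations (4)--(5). That is the whole argument; no fresh induction, case split, or policy construction is required here, since the inductive work has already been carried out — once in Theorem 1, where the deterministic successor state $s^{+}$ is used to realise each element of $F_t(s)$ by an explicit policy, and once in Proposition 1, where Lemmas 1 and 2 drive the induction on $t$.

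I do not expect any genuine obstacle in this corollary. The only conceivable point of care is bookkeeping: making sure the hypothesis of Proposition 1 (property (P) in its original, Markovian-policy form) is precisely the conclusion of Theorem 1, which it is, so the two results plug together without modification. If one wanted a self-contained proof instead of a citation, one could re-run the induction of Proposition 1 while substituting the deterministic structure of Assumption 1 at each step, but this would merely re-derive material already in the paper, so the concise approach of quoting Theorem 1 and Proposition 1 is preferable.
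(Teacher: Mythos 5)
Your proposal is correct and matches the paper exactly: the paper derives this corollary by combining Theorem 1 (Assumption 1 implies (P)) with Proposition 1 ((P) implies $U_t(s) = e(V_t(s))$ for all $t \leq N$ and $s \in S$), which is precisely the two-step chaining you describe.
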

\ \\
\section{The White Equations and Non-Markovian Policies}

Before turning to another class of models which have (P) as their property, we start with some useful background. Recall that the motivation for introducing (P) was the observation that, for some $t < N$ and $s \in S$, vectors in $F_t(s)$ may have no corresponding policies in $P_W$. This allows for the possibility that $F_t(s) \not \subseteq V_t(s)$, and thus for the possibility that $U_t(s) = e(F_t(s)) \not \subseteq V_t(s)$. Suppose now that the ``theorem" was true under some special set of assumptions, and that we were to show this by proving (P). Furthermore, suppose we were to proceed by induction, having noticed that the base case ($t = N-1$) holds, because for any $f = R_{N-1}(s, a) + \sum_{j \in S}p_{N-1}(j | s, a)R_N(j) \in F_{N-1}(s)$, we have $f = u_{N-1}^{\pi}(s) \in V_{N-1}(s)$ where $\pi$ is any policy that prescribes $a$ in $s$ at time $N-1$.

To carry out the induction, assume that for some $t < N-1$, \begin{equation}\forall s \in S,\ F_{t+1}(s) \subseteq V_{t+1}(s)\end{equation}
Our task is to show that for all $s \in S$, for any $w \in F_t(s)$, $w \in V_t(s)$. Let $s \in S$ and $w \in F_t(s)$. By definition of $F_t(s)$, there is an $a \in A_s$ and, by the induction hypothesis, $|S|$ policies $\pi_1, ..., \pi_{|S|}$ such that \begin{equation}w = R_t(s, a) + \sum_{j \in S}p_t(j | s, a)u_{t+1}^{\pi_j}(j)\end{equation}
In principle, a policy of the form ``if the state at time $t$ is $s$, take action $a$; if the state at time $t+1$ is $j$, take the action prescribed by $\pi_j$, and continue using $\pi_j$ over $t+2, ..., N-1$" should accrue an expected total reward of $w$ over $t, t+1, ..., N$ assuming the state at time $t$ was $s$. Such a policy, however, cannot be formulated within the present framework. Indeed, our definition of policy (more rigorously, of decision rule) presupposes that the only information relevant to decision making at time $t$ is the state at that time, whereas if we were to implement the policy just described, we would also need to know, at all times $k > t+1$, the state that was observed at time $t+1$. This leads naturally to a concept of policy where decision rules are functions of histories rather than of states, where we define ``history at time $t$" as the random trajectory of states observed prior to and including $t$, i.e \begin{equation}Z_t = (X_1, X_2, ..., X_t)\end{equation}
if $t > 1$, and $Z_1 = X_1$. As before, $X_i$ denotes the state of the process at epoch $i$.
For all $t \leq N$, let $H_t = S^{t}$ denote the set of all histories at time $t$, and notice that $H_1 = S$, so that we may use histories and states interchangeably at $t = 1$. For each epoch $t < N$, the phrase ``$t$-decision rule" shall refer to any mapping from $H_t$ to $A$. Let $D_t$ be the set of $t$-decision rules for any $t < N$, and $D = \cup_{t < N}D_t$ be the overall set of decision rules. Assume that each $D_t$ is compact. A policy $\pi = (d_1, ..., d_{N-1})$ is a sequence of decision rules where each $d_t$, $t < N$, is a $t$-decision rule. Let $P$ denote the set of all policies.

It should be made clear here that $t$-decision rules are distinct from the decision rules which \cite{puterman1994}, for example, calls history-dependent (HD in short; ``D" for ``deterministic"). For Puterman, histories contain past actions, so that an HD decision rule at epoch $t < N$ maps $S \times A \times S \times ... \times A \times S$ ($S$ $t$ times, $A$ ($t-1$) times) to $A$, while a $t$-decision rule maps $S^t$ to $A$. It is possible, however, to construe each $D_t$ as the subset of HD decision rules at epoch $t$ which ignore actions taken up to $t$. Similarly, decision rules in \cite{white1982} can be viewed as the subset of $D$ for which past states are irrelevant; that is, if $h_t = (s_1, ..., s_t)$ is any history at time $t$, and $d_t$ is a $t$-decision rule such that $d_t(h_t)$ depends on $h_t$ only through $s_t$, then $d_t$ is in fact a decision rule in White's sense. It follows that $P_W \subset P \subset P_{HD}$, where $P_{HD}$ is the set of all policies containing HD decision rules.

\begin{proposition}
$P_W \subset P$.
\end{proposition}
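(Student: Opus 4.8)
The plan is to verify the two halves of the strict inclusion separately: that every White policy can be regarded as a policy in the refined sense, and that some refined policy is not a White policy. For the first half I would make precise the identification already indicated in the paragraph preceding the proposition. Given $\pi = (d_1, \dots, d_{N-1}) \in P_W$, each Markovian decision rule $d_t : S \to A$ determines a $t$-decision rule $\hat{d}_t : H_t \to A$ by $\hat{d}_t(s_1, \dots, s_t) = d_t(s_t)$, which is a legitimate element of $D_t$; then $\hat{\pi} = (\hat{d}_1, \dots, \hat{d}_{N-1}) \in P$ induces the same distribution over state trajectories, hence the same return functions $u_t^{\cdot}$, as $\pi$. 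Identifying each $\pi \in P_W$ with its image $\hat{\pi}$ — equivalently, viewing a White decision rule as a $t$-decision rule whose value at $h_t = (s_1, \dots, s_t)$ depends on $h_t$ only through $s_t$ — gives $P_W \subseteq P$.

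For the second half I would exhibit an explicit element of $P \setminus P_W$. Since $N > 2$ is in force, the epoch $t = 2$ is a decision epoch with $H_2 = S \times S$ strictly refining $S$; picking distinct states $s, s' \in S$ and a state $\sigma$ with two distinct admissible actions $a, a' \in A_{\sigma}$, define $d_2 \in D_2$ by $d_2(s, \sigma) = a$, $d_2(s', \sigma) = a'$, and, for every other history $h = (s_1, s_2)$, $d_2(h)$ equal to some arbitrarily fixed action in $A_{s_2}$. If $d_2$ were a White decision rule there would be a map $\delta : S \to A$ with $d_2(s_1, s_2) = \delta(s_2)$ for all $(s_1, s_2) \in S \times S$, which would force $a = \delta(\sigma) = a'$, a contradiction. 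Completing $d_2$ to a policy $\pi = (d_1, d_2, d_3, \dots, d_{N-1}) \in P$ with the remaining decision rules chosen arbitrarily yields $\pi \notin P_W$, since membership in $P_W$ requires every one of the $d_i$ to be Markovian. Hence $P_W \subsetneq P$.

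There is no serious obstacle here; the content is purely definitional and combinatorial. The only point that warrants a word of care is that strictness genuinely requires the model to be non-degenerate: if $S$ were a singleton, or if every $A_s$ were a singleton, or if $N = 2$, then $H_t$ would be a one-point set for all relevant $t$ (or $P$ itself a singleton), and one would merely have $P = P_W$. All of these are excluded under the hypotheses in force — a finite-state model with genuine action choices and $N > 2$ — so the construction above applies verbatim and the strict inclusion holds.
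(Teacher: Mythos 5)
Your proof is correct, and it is in fact more complete than what the paper provides: the paper offers no proof environment for this proposition at all, treating it as immediate from the preceding remark that a White decision rule is exactly a $t$-decision rule whose value at $h_t=(s_1,\dots,s_t)$ depends on $h_t$ only through $s_t$. Your first half formalizes precisely that identification, including the (worth stating) observation that $\hat{\pi}$ induces the same return functions as $\pi$, so the embedding is faithful with respect to the quantities $u_t^{\pi}$ that matter later. Your second half --- the explicit $2$-decision rule distinguishing two histories that end in the same state $\sigma$ --- supplies the strictness of the inclusion, which the paper asserts via the symbol $\subset$ but never argues. One small caution: you claim the degenerate cases ($|S|=1$, or all $A_s$ singletons) are ``excluded under the hypotheses in force,'' but the paper only assumes $S$ finite and each $A_s$ compact; it nowhere rules out a one-state model or singleton action sets, so strictness genuinely requires the mild non-degeneracy you identify. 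This does not invalidate your argument --- it sharpens the proposition by making explicit the hypotheses under which strictness, as opposed to mere inclusion, actually holds.
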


Now that we have refined our concepts of decision rule and policy, we may define the expected total reward accrued for using a policy $\pi = (d_1, ..., d_{N-1}) \in P$ from time $t < N$ onward if the history at $t$ is $h_t = (s_1, ..., s_t)$ as \begin{equation}u_t^{\pi}(h_t) := \mathbb{E}_{\pi}^{h_t}[\sum_{i = t}^{N-1}R_{i}(X_i, d_i(Z_i)) + R_N(X_N)].\end{equation} Since $Z_t = h_t$, we obtain a recursion analogous to Equation (1): \begin{equation}
u_t^{\pi}(h_t) = R_t(s_t, d_t(h_t)) + \sum_{j \in S}p_t(j | s_t, d_t(h_t))u_{t+1}^{\pi}(h_t, j)
\end{equation}
where we let $(h_t, j) := (s_1, ..., s_t, j)$ for all $j \in S$, and
\begin{equation}
u_N^{\pi}(h_N) = R_N(s)
\end{equation}
for all histories at time $N$, $h_N = (s_1, ..., s_N)$, with $s_N = s$. Let $V'_t(h_t) = \bigcup_{\pi \in P} \{ u_t^{\pi}(h_t) \}$ for all $t \leq N$.

Given this new framework, we are now in a position to construct policies of the form described earlier in this section, that is, policies which prescribe some action at time $t$, then, depending on the resulting state $j$ at $t+1$, pursue from that moment on a policy associated with $j$. As mentioned earlier, the importance of such policies is their ability to achieve expected total rewards of the form \begin{equation}w = R_t(s_t, a) + \sum_{j \in S}p_{t}(j | s_{t}, a)u_{t+1}^{\pi_j}(h_t, j)\end{equation}
where $\pi_1, ..., \pi_{|S|} \in P$ are arbitrary policies, $t < N$ is some decision epoch, $h_t = (s_1, ..., s_t) \in H_t$, and $a \in A_{s_t}$ some action (see Proposition 4 below). This enables us to prove a particular restatement of property (P), namely \begin{equation}\forall t < N,\ \forall h_t \in H_t,\ F'_t(h_t) \subseteq V'_t(h_t)\end{equation}
where for all $t < N$, for all $h_t = (s_1, ..., s_t) \in H_t$, 
\begin{equation}F'_t(h_t) = \bigcup_{a \in A_{s_t}}\biggl(\{R_t(s_t, a)\} \bigoplus \sum_{j \in S}p_t(j | s_t, a)U'_{t + 1}(h_t, j)\biggr)
\end{equation}
and \begin{equation}\forall t < N,\ U'_t(h_t) := e(F'_t(h_t))\end{equation} with the boundary condition \begin{equation}U'_N(h_N) := \{R_N(s)\}\end{equation} for all $h_N = (s_1, ..., s_N)$ with $s_N = s$. It is clear that this revised property, which we shall refer to as (P'), is fundamentally no different from (P); informally, we may say that it \textit{is} (P), but enunciated in a different framework. It shall later be proven that (P') provides a sufficient condition for the White equations to admit the $e(V'_t(h_t))$, $t \leq N$, $h_t \in H_t$, as their solutions, which means it plays the same role as (P) in the original framework (Propositions 5 and 6). More importantly, we shall see that (P') is necessarily true (Theorem 3), and therefore that the solutions to the White equations are the $e(V'_t(h_t))$, $t \leq N$, $h_t \in H_t$ (Theorem 5). 

\begin{proposition}
Let $t < N$ and $h_t = (s_1, ..., s_t) \in H_t$. Let $\pi_1, ..., \pi_{|S|} \in P$ be policies, and $a \in A_{s_t}$ be an action. For each policy $\pi_j$, write $\pi_j = (d^j_1, ..., d^j_{N-1})$. Let \begin{equation*}w = R_t(s_t, a) + \sum_{j \in S}p_{t}(j | s_{t}, a)u_{t+1}^{\pi_j}(h_t, j)\end{equation*}
Let
\begin{enumerate}
\item $d_t: H_t \rightarrow A$ be any $t$-decision rule such that $d_t(h_t) = a$;
\item $d_{t+1}: H_{t+1} \rightarrow A$ be any $(t+1)$-decision rule such that for all $j \in S$, $d_{t+1}(h_t, j) = d^j_{t+1}(h_t, j)$;
\item for each epoch $N-1 \geq k > t+1$, $d_k: H_k \rightarrow A$ be any $k$-decision rule such that for any $(k-t)$ states $j_{t+1}, ..., j_k \in S$, \begin{equation*}d_{k}(h_t, j_{t+1}, ..., j_k) = d^{j_{t+1}}_{k}(h_t, j_{t+1}, ..., j_k) \end{equation*}
\end{enumerate}

Finally, let $\pi$ be any element in $D_1 \times ... \times D_{N-1}$ with $(\pi)_i = d_i$ for all $i = t, t+1, ..., N-1$.

Then:
\begin{itemize}
\item[\textbf{(a)}] $\pi$ is a policy, i.e $\pi \in P$;
\item[\textbf{(b)}] $u_t^{\pi}(h_t) = w$.
\end{itemize}
\end{proposition}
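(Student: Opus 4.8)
The plan is to settle (a) directly from the definitions and to obtain (b) from a backward induction showing that $\pi$ reproduces the behaviour of $\pi_j$ along every trajectory occupying state $j$ at epoch $t+1$.

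For \textbf{(a)}, observe that each of $d_t, d_{t+1}, \dots, d_{N-1}$ is, by construction, a mapping from the corresponding $H_k$ into $A$, hence a $k$-decision rule, i.e.\ an element of $D_k$; for $i < t$ the component $(\pi)_i$ is taken to be some element of $D_i$ (each $D_i$ being nonempty). Since a policy is, by definition, any member of $D_1 \times \dots \times D_{N-1}$, we conclude $\pi \in P$.

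For \textbf{(b)}, apply the recursion (20) at epoch $t$ with history $h_t$; since $d_t(h_t) = a$ this gives
\[
u_t^{\pi}(h_t) \;=\; R_t(s_t, a) + \sum_{j \in S} p_t(j \mid s_t, a)\, u_{t+1}^{\pi}(h_t, j),
\]
so, comparing with the expression defining $w$, it suffices to prove $u_{t+1}^{\pi}(h_t, j) = u_{t+1}^{\pi_j}(h_t, j)$ for every $j \in S$. Fix $j \in S$ and establish, by backward induction on $k$ running from $N$ down to $t+1$, the strengthened claim: for every history $h_k = (h_t, j, j_{t+2}, \dots, j_k) \in H_k$ extending $(h_t,j)$ (which is just $(h_t,j)$ when $k = t+1$), one has $u_k^{\pi}(h_k) = u_k^{\pi_j}(h_k)$. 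The base case $k = N$ holds because (21) makes both sides equal to $R_N$ evaluated at the terminal state of $h_k$, independently of the policy. For the inductive step, suppose the claim holds at epoch $k+1$ for some $t+1 \le k < N$, and let $h_k = (h_t, j, j_{t+2}, \dots, j_k)$ with last state $s_k$. The crucial point is that $d_k(h_k) = d^j_k(h_k)$: for $k = t+1$ this is the condition $d_{t+1}(h_t, j) = d^j_{t+1}(h_t, j)$ imposed in the construction, and for $k > t+1$ it is the condition $d_k(h_t, j_{t+1}, \dots, j_k) = d^{j_{t+1}}_k(h_t, j_{t+1}, \dots, j_k)$, where the distinguished superscript $j_{t+1}$ equals $j$ because $h_k$ extends $(h_t,j)$. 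Expanding both $u_k^{\pi}(h_k)$ and $u_k^{\pi_j}(h_k)$ by (20), the reward term $R_k(s_k,\cdot)$ and the transition probabilities $p_k(\cdot \mid s_k,\cdot)$ coincide (they involve only $s_k$ and the common action $d^j_k(h_k)$), while $u_{k+1}^{\pi}(h_k, l) = u_{k+1}^{\pi_j}(h_k, l)$ for each $l \in S$ by the inductive hypothesis, since $(h_k, l)$ again extends $(h_t,j)$. Hence $u_k^{\pi}(h_k) = u_k^{\pi_j}(h_k)$, completing the induction. Taking $k = t+1$ gives the required equalities, whence $u_t^{\pi}(h_t) = w$.

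The only real subtlety is organisational: one must run the induction with the strengthened hypothesis (equality on \emph{all} histories prolonging $(h_t,j)$, not merely at $(h_t,j)$ itself) and verify that the second and third defining conditions on $d_{t+1}, \dots, d_{N-1}$ together pin $d_k$ to $d^j_k$ precisely on those histories. Once this is set up, each step is a mechanical application of (20) and (21), so I anticipate no genuine obstacle beyond the bookkeeping.
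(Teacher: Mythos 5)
Your proof is correct and follows essentially the same route as the paper's: both expand the recursion at epoch $t$ and reduce the claim to the fact that $\pi$ agrees with $\pi_j$ on every history extending $(h_t, j)$. Your explicit backward induction on $k$ in fact supplies the justification that the paper leaves implicit when it replaces $u_{t+2}^{\pi}(h_t, j, j')$ by $u_{t+2}^{\pi_j}(h_t, j, j')$ in a single step, so your write-up is, if anything, the more complete of the two.
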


\begin{proof}
Part (a) follows from the definition of $P = D_1 \times ... \times D_{N-1}$ and the well-definedness of the $d_i$'s. We shall prove part (b).

\begin{flalign*}
u_{t}^{\pi}(h_t) &= R_t(s, d_t(h_t)) + \sum_{j \in S}p_t(j | s, d_t(h_t))u_{t+1}^{\pi}(h_t, j)\\
&= R_t(s, a) + \sum_{j \in S}p_t(j | s, a)\biggl(R_{t+1}(j, d_{t+1}^{j}(h_t, j)) + \sum_{j' \in S}p_t(j' | j, d_{t+1}^{j}(h_t, j))u_{t+2}^{\pi}(h_t, j, j')\biggr)\\
&= R_t(s, a) + \sum_{j \in S}p_t(j | s, a)\biggl(R_{t+1}(j, d_{t+1}^{j}(h_t, j)) + \sum_{j' \in S}p_t(j' | j, d_{t+1}^{j}(h_t, j))u_{t+2}^{\pi_j}(h_t, j, j')\biggr)\\
&= R_t(s, a) + \sum_{j \in S}p_t(j | s, a)u_{t+1}^{\pi_j}(h_t, j)\\
&= w.
\end{flalign*}

\end{proof}

The same method which was used to prove that (P) yields $U_t(s) = e(V_t(s))$ for all $t \leq N$ and $s \in S$ (see Proposition 1, Sec. 3) can be applied here to show that (P') yields $U'_t(h_t) = e(V'_t(h_t))$ for all $t \leq N$ and $h_t \in H_t$. To do this, one will require the analogue of Lemma 1 for the $V'_t(h_t)$'s, i.e \begin{lemma}Let $t \leq N$ and $h_t \in H_t$. For each $u \in V'_t(h_t)$, there is a $v \in e(V'_t(h_t))$ such that $v \geq u$.\end{lemma}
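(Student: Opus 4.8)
The plan is to deduce this lemma from two ingredients: first, that $V'_t(h_t)$ is a nonempty compact subset of $\mathbb{R}^m$ for every $t \leq N$ and $h_t \in H_t$; second, the elementary fact that if $X \subseteq \mathbb{R}^m$ is nonempty and compact, then above every point of $X$ sits a Pareto efficient point of $X$. This is exactly the route along which Lemma~1 is obtained for the sets $V_t(s)$ in \cite{white1982}, so the only genuinely new step is to re-establish the compactness of the policy-return sets in the history-dependent framework.

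The second ingredient is quick. Given a nonempty compact $X \subseteq \mathbb{R}^m$ and $u \in X$, set $X_u = \{\, v \in X \mid v \geq u \,\}$; this is nonempty (it contains $u$) and closed in $X$, hence compact, so the continuous map $v \mapsto \sum_{i=1}^m v_i$ attains a maximum on $X_u$ at some $v$. Then $v \geq u$, and $v \in e(X)$: if some $w \in X$ satisfied $w \geq v$ with $w \neq v$, then transitivity would put $w$ in $X_u$ while $\sum_i w_i > \sum_i v_i$, contradicting the choice of $v$. Applying this with $X = V'_t(h_t)$ gives the lemma.

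For the first ingredient I would argue by backward induction on $t$ using the recursion (20). Give each $D_t$ the subspace topology it inherits from the finite product of the compact action sets $A_s$ in which it naturally sits; by the standing assumption that each $D_t$ is compact, $P = D_1 \times \dots \times D_{N-1}$ is then a nonempty compact space. It suffices to show that the evaluation map $\Phi_{t,h_t}\colon P \to \mathbb{R}^m$, $\pi \mapsto u_t^{\pi}(h_t)$, is continuous, since $V'_t(h_t)$ is its image and the continuous image of a nonempty compact set is nonempty and compact. For $t = N$ the map is constant. Assuming $\Phi_{t+1,h_{t+1}}$ continuous for every $h_{t+1} \in H_{t+1}$, one notes that $\pi \mapsto d_t(h_t)$ is continuous — it is a coordinate projection $P \to D_t$ followed by the coordinate evaluation $D_t \to A_{s_t}$ — and then reads off from (20) that $\Phi_{t,h_t}$ is a finite sum of products of the continuous scalar maps $a \mapsto p_t(j \mid s_t, a)$ with the continuous $\mathbb{R}^m$-valued maps $a \mapsto R_t(s_t, a)$ and $\pi \mapsto \Phi_{t+1,(h_t,j)}(\pi)$, hence continuous.

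I expect the only delicate point to be this continuity step: one must pin down precisely the topology carried by $D_t$ — namely the one making every evaluation $d_t \mapsto d_t(h_t)$ continuous, which is what the hypothesis ``$D_t$ is compact'' should be taken to refer to — and then check that coordinate evaluation, the assumed continuity of $R_t$ and $p_t$ in the action, scalar multiplication, and finite summation in $\mathbb{R}^m$ all preserve continuity. Once this is in place the induction closes at $t = N$ and compactness propagates down to $t = 1$, and the second ingredient finishes the proof. A purely combinatorial argument does not seem available, since the action sets may be infinite and $V'_t(h_t)$ need not be finite, so some such compactness argument appears unavoidable.
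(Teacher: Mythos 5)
Your proposal is correct, and it reaches the lemma by a route that differs from the paper's in both of its halves. For the existence of a dominating efficient point over a compact set, the paper invokes White's 1977 kernel theorem (Theorem~2), which requires verifying that each upper section $S(u)=\{v\in V'_t(h_t)\mid v\geq u\}$ is closed; you instead maximize the linear scalarization $v\mapsto\sum_i v_i$ over the compact set $X_u$ and observe that the maximizer is Pareto efficient. Your argument is more elementary and self-contained (it needs only reflexivity and transitivity of the componentwise order plus the strict increase of the sum under dominance), at the cost of being specific to orders admitting a strictly monotone continuous scalarization, whereas the kernel theorem is the tool the paper later leans on when it discusses general relations $\geq$ in the Conclusion. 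For the compactness of $V'_t(h_t)$, the paper argues sequentially: it bounds the sets by the triangle inequality, extracts convergent subsequences via its Lemma~4, uses compactness of $D_t$ to identify the limiting transition probabilities, and then must call on Proposition~4 to stitch together a single policy attaining the limit point. You shortcut all of this by observing that $V'_t(h_t)$ is the image of the compact space $P=D_1\times\cdots\times D_{N-1}$ under the evaluation map $\pi\mapsto u_t^{\pi}(h_t)$, whose continuity follows by backward induction from the recursion and the assumed continuity of $R_t$ and $p_t$ in the action; continuity of coordinate evaluation $d_t\mapsto d_t(h_t)$ in the pointwise topology is exactly the delicate point you flag, and it is legitimate given the paper's choice of topology on $D_t$ (pointwise convergence on the finite set $H_t$ is the product topology). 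This buys you a cleaner proof that dispenses with Lemma~4 and with the use of Proposition~4 inside the compactness argument, while the paper's more hands-on subsequence construction has the side benefit of exhibiting explicitly the policy that attains each limit point.
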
 \noindent 

An essential component of our proof of Lemma 3 is a general theorem which appears in \cite{white1977kernels}, restated below in the vocabulary of the present paper:

\begin{theorem}\citep{white1977kernels}
Let $R$ be a relationship on a set $X$. Define the efficient subset of $X$ as $e(X) = \{x \in X \mid,\ yRx \implies y = x\}$. If $X$ is compact and $S(x) = \{y \in X \mid\, yRx\}$ is closed for all $x \in X$, then, for all $x \in X$, there exists $y \in e(X)$ such that $yRx$.
\end{theorem}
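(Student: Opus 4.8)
The plan is to prove this by a Zorn's lemma argument, with the topological hypotheses entering only to verify the chain condition, via a finite-intersection-property argument. Before starting I would note that, as stated, the claim presupposes more than a bare ``relationship'': if $R$ is merely reflexive and transitive but not antisymmetric --- e.g.\ the total relation on a two-point space --- then $e(X)$ can be empty while the hypotheses hold, so the statement fails. I would therefore take $R$ to be a partial order on the topological space $X$; this is the reading under which the theorem is true, and it is exactly the situation in which the paper applies it, with $R = \geq$ on a compact subset of $\mathbb{R}^m$ (Lemma 3).

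Fix $x_0 \in X$ and put $K := S(x_0) = \{y \in X \mid yRx_0\}$. Then $K$ is nonempty --- it contains $x_0$, by reflexivity --- and closed in $X$ by hypothesis, hence compact. The first step is to observe that it suffices to find an $R$-maximal element $y^{*}$ of the poset $(K, R)$: such a $y^{*}$ satisfies $y^{*} R x_0$ because $y^{*} \in K$, and it lies in $e(X)$ because if $z \in X$ with $zRy^{*}$, then $zRx_0$ by transitivity, so $z \in K$, and maximality of $y^{*}$ in $K$ forces $z = y^{*}$. So the whole task reduces to producing a maximal element of $K$.

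The second step applies Zorn's lemma to $(K, R)$, for which I would verify that every nonempty chain has an upper bound in $K$. Let $C$ be a nonempty chain and consider the family $\{\, S(c) \cap K \mid c \in C \,\}$. Each set is nonempty --- it contains $c$, by reflexivity --- and closed in $K$, since $S(c)$ is closed in $X$ and $K$ is closed; and the family has the finite intersection property: given $c_1, \dots, c_n \in C$, the finite subchain $\{c_1, \dots, c_n\}$ has a largest element $c_j$, and then $c_j R c_i$, i.e.\ $c_j \in S(c_i) \cap K$, for every $i$. Compactness of $K$ yields a point $u \in \bigcap_{c \in C}(S(c) \cap K)$; this $u$ lies in $K$ and satisfies $uRc$ for all $c \in C$, i.e.\ is an upper bound of $C$ in $K$. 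Zorn's lemma then delivers the maximal element required by the first step, and the theorem follows.

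The proof is short and has no hard analytic core; the one thing to get right is the packaging --- recognizing that one should restrict to the up-set $S(x_0)$, so that transitivity carries maximality in $K$ over to membership in $e(X)$, and that the assumed closedness of the sections $S(x)$ is exactly what converts compactness of $X$ into the chain condition needed for Zorn. I would also remark that in the concrete case used in the paper, $X \subseteq \mathbb{R}^m$ and $R = \geq$, Zorn's lemma can be dispensed with altogether: the sections $S(x)$ are then automatically closed, and maximizing the continuous functional $x \mapsto \sum_{i=1}^{m} x_i$ over the nonempty compact set $S(x_0)$ produces, directly, an element of $e(X)$ dominating $x_0$.
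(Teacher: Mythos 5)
Your proof is correct, but note that the paper itself offers no proof of this statement: it is imported verbatim as a citation from White (1977), so there is nothing internal to compare against, and your self-contained argument is necessarily a different route. Two things in your write-up deserve emphasis. First, your preliminary objection is well taken: as literally stated, for a bare ``relationship'' $R$, the theorem is false --- the total relation on a two-point discrete space satisfies both hypotheses yet has $e(X)=\emptyset$ --- and even the transitivity that the paper's Conclusion quietly adds does not repair this (the total relation is transitive); antisymmetry and reflexivity are genuinely needed, and reading $R$ as a partial order, as you do, is the honest fix and matches the only use the paper makes of the result ($R={}\geq$ on compact subsets of $\mathbb{R}^m$ in Lemmas 1 and 3). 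Second, your argument itself --- restrict to the up-set $K=S(x_0)$, which is nonempty, closed, hence compact; verify the Zorn chain condition by extracting a point of $\bigcap_{c\in C}\bigl(S(c)\cap K\bigr)$ via the finite intersection property; then push maximality in $K$ up to membership in $e(X)$ by transitivity --- is the standard and correct proof of this maximal-element theorem, and it is in the same spirit as the Zorn-based sketch the author gives for the (C2) variant in the Conclusion. Your closing observation is also worth keeping: for the only case the paper actually needs, $X\subseteq\mathbb{R}^m$ compact and $R={}\geq$, one can bypass Zorn entirely by maximizing $x\mapsto\sum_{i=1}^m x_i$ over the nonempty compact set $S(x_0)$; the maximizer dominates $x_0$ and is Pareto efficient in $X$, which gives Lemma 1 and Lemma 3 directly without appealing to the cited theorem at all.
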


This theorem implies that we need only show that $V'_t(h_t)$ is compact and that $S(u) = \{v \in V'_t(h_t) \mid\, v \geq u\}$ is closed for all $t \leq N$, $h_t \in H_t$, $u \in V'_t(h_t)$. For all $t \leq N$, $h_t \in H_t$, $V'_t(h_t)$, being a subset of $\mathbb{R}^m$, is compact if and only if it is bounded and closed, as per the Bolzano-Weierstrass theorem. In this connection, it is necessary to introduce a topology on the $V'_t(h_t)$'s with respect to which convergence is defined. Accordingly, we equip these sets with the topology of pointwise convergence. Convergence in the $D_t$'s is also with respect to this topology. 

Our proof is by induction on $t$. In particular, to demonstrate the closedness of $V'_t(h_t)$ for some $t$ and $h_t \in H_t$, assuming that $V'_{t+1}(h_{t+1})$ is compact for all $h_{t+1} \in H_{t+1}$, we invoke an important lemma concerning the weighted sum of $k$ sequences, $k \geq 1$, where each sequence takes its values in a compact subset of $\mathbb{R}^m$:

\begin{lemma}
Equip $\mathbb{R}^m$ with the topology of pointwise convergence. Let $k \geq 1$ and let $U_1, ..., U_k$ be $k$ nonempty compact subsets of $\mathbb{R}^m$. If $(p_{1, n})_{n \geq 0}$, ..., $(p_{k, n})_{n \geq 0}$ are $k$ sequences with values in $[0, 1]$ and $(x_{1, n})_{n \geq 0}$, ..., $(x_{k, n})_{n \geq 0}$ sequences with values in $U_1, ..., U_k$ respectively, then there exists a strictly increasing map $\phi: \mathbb{N} \rightarrow \mathbb{N}$ such that
\begin{itemize}
\item[\textbf{(a)}] for all $i = 1, ..., k$, $p_{i, \phi(n)} \to p_i^{*}$ for some $p_i^{*} \in [0, 1]$;
\item[\textbf{(b)}] for all $i = 1, ..., k$, $x_{i, \phi(n)} \to x_i^{*}$ for some $x_i^{*} \in U_i$;
\item[\textbf{(c)}] and $\sum_{i = 1}^{k} p_{i, \phi(n)}x_{i, \phi(n)} \to \sum_{i = 1}^{k} p_i^{*}x_i^{*}$.
\end{itemize}
\end{lemma}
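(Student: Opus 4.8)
The plan is a routine iterated subsequence extraction (a finite ``diagonal'' argument) followed by an appeal to continuity of the vector-space operations. First I would observe that on $\mathbb{R}^m$ the topology of pointwise convergence coincides with the usual Euclidean topology: a point of $\mathbb{R}^m$ is nothing but a function on the finite index set $\llbracket 1, m \rrbracket$, and pointwise convergence of such functions is exactly coordinatewise convergence. In particular each $U_i$, being compact in $\mathbb{R}^m$, is sequentially compact by the Bolzano--Weierstrass theorem, and $[0,1]$ is sequentially compact for the same reason.

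Next I would extract subsequences one ``coordinate'' of the problem at a time. Starting from $(p_{1,n})_{n \geq 0} \subseteq [0,1]$, pick a strictly increasing $\phi_1$ with $p_{1,\phi_1(n)} \to p_1^{*} \in [0,1]$. Working now inside the subsequence indexed by $\phi_1$, extract a further strictly increasing map $\phi_2$ along which $(p_{2,n})$ converges, and continue in this fashion; after $k$ such steps one obtains a single strictly increasing map $\psi$ (the composition of the $\phi_i$'s) along which $p_{i,\psi(n)} \to p_i^{*}$ for every $i = 1, \dots, k$ simultaneously. Then, continuing from $\psi$ and using sequential compactness of $U_1, \dots, U_k$, repeat the procedure $k$ more times to obtain a further subsequence $\phi$ of $\psi$ with $x_{i,\phi(n)} \to x_i^{*} \in U_i$ for every $i$. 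Since $\phi$ is a composition of finitely many strictly increasing maps it is strictly increasing, and passing to a subsequence preserves every limit already secured, so $\phi$ satisfies \textbf{(a)} and \textbf{(b)}.

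For \textbf{(c)} I would simply invoke continuity of scalar multiplication $\mathbb{R} \times \mathbb{R}^m \to \mathbb{R}^m$ and of vector addition: from $p_{i,\phi(n)} \to p_i^{*}$ and $x_{i,\phi(n)} \to x_i^{*}$ we get $p_{i,\phi(n)} x_{i,\phi(n)} \to p_i^{*} x_i^{*}$ for each $i$, and a finite sum of convergent sequences in $\mathbb{R}^m$ converges to the sum of the limits, whence $\sum_{i=1}^{k} p_{i,\phi(n)} x_{i,\phi(n)} \to \sum_{i=1}^{k} p_i^{*} x_i^{*}$.

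I do not anticipate a genuine obstacle here. The only point requiring care is the bookkeeping of the nested extractions: one must extract along subsequences of subsequences, rather than independently, so that all $2k$ limits hold along the final map $\phi$. The other worth-stating, though trivial, ingredient is the identification of the topology of pointwise convergence on the finite-dimensional space $\mathbb{R}^m$ with its norm topology, which is precisely what licenses the repeated use of Bolzano--Weierstrass.
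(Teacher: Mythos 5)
Your proposal is correct and follows essentially the same route as the paper: repeated application of Bolzano--Weierstrass to extract nested subsequences (the paper organizes the $2k$ extractions as a formal induction on $k$, you write them as a direct finite chain, but the argument is the same), followed by continuity of scalar multiplication and addition for part (c). No gap.
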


\begin{proof}
Let us first verify parts (a) and (b) for $k = 1$. Let $U_1$ be a nonempty compact subset of $\mathbb{R}^m$, $(p_{1, n})_{n \geq 0}$ be a sequence with values in $[0, 1]$ and $(x_{1, n})_{n \geq 0}$ be a sequence with values in $U_1$. Since $[0, 1]$ is compact, there exist a strictly increasing map $\alpha: \mathbb{N} \rightarrow \mathbb{N}$ and a $p_1^{*} \in [0, 1]$ such that $p_{1, \alpha(n)} \to p_1^{*}$. Consider now the subsequence of $(x_{1, n})_{n \geq 0}$ indexed by $\alpha$, $(x_{1, \alpha(n)})_{n \geq 0}$. Because this subsequence also takes values in a compact set, $U_1$, there exist a strictly increasing map $\beta: \mathbb{N} \rightarrow \mathbb{N}$ and a $x_1^{*} \in U_1$ such that $x_{1, \alpha(\beta(n))} \to x_1^{*}$. We also have that $(p_{1, \alpha(\beta(n))})_{n \geq 0}$ is a subsequence of $(p_{1, \alpha(n)})_{n \geq 0}$, and therefore $(p_{1, \alpha(\beta(n))})_{n \geq 0} \to p_1^{*}$. Let $\phi = \alpha \circ \beta$, which is a strictly increasing map from $\mathbb{N}$ to $\mathbb{N}$. Then $p_{1, \phi(n))} \to p_1^{*}$ and $x_{1, \phi(n)} \to x_1^{*}$, hence parts (a) and (b), from which part (c) follows.

Let now $k \geq 1$, and assume that the lemma holds for any $k$ nonempty compact subsets of $\mathbb{R}^m$. Let $U_1, ..., U_{k+1}$ be $(k+1)$ nonempty compact subsets of $\mathbb{R}^m$, $(p_{1, n})_{n \geq 0}$, ..., $(p_{k+1, n})_{n \geq 0}$ be sequences with values in $[0, 1]$, and $(x_{1, n})_{n \geq 0}$, ..., $(x_{k+1, n})_{n \geq 0}$ be sequences with values in $U_1, ..., U_{k+1}$ respectively. By the induction hypothesis, there exist $p_1^{*}$, ..., $p_k^{*}$ in $[0, 1]$, $x_1^{*}$, ..., $x_k^{*}$ in $U_1, ..., U_k$ respectively, and a strictly increasing map $\gamma: \mathbb{N} \rightarrow \mathbb{N}$ such that $p_{i, \gamma(n)} \to p_i^{*}$ and $x_{i, \gamma(n)} \to x_i^{*}$ for every $i = 1, ..., k$. Focus now on $(p_{k+1, \gamma(n)})_{n \geq 0}$. Due to the compactness of $[0, 1]$, $(p_{k+1, \gamma(n)})_{n \geq 0}$ admits a subsequence, say $(p_{k+1, \gamma(\alpha(n)}))_{n \geq 0}$, where $\alpha: \mathbb{N} \rightarrow \mathbb{N}$ is strictly increasing, such that $p_{k+1, \gamma(\alpha(n))} \to p_{k+1}^{*}$ for some $p_{k+1}^{*} \in [0, 1]$. Also, we know that $p_{i, \gamma(\alpha(n))} \to p_i^{*}$ and $x_{i, \gamma(\alpha(n))} \to x_i^{*}$ for every $i = 1, ..., k$. By letting $\phi = \gamma \circ \alpha$, we can see that for every $i = 1, ..., k+1$, $p_{i, \phi(n)} \to p_i^{*}$ and $x_{i, \phi(n)} \to x_i^{*}$. Thus $\sum_{i = 1}^{k+1} p_{i, \phi(n)}x_{i, \phi(n)} \to \sum_{i = 1}^{k+1} p_i^{*}x_i^{*}$. The lemma follows by induction.
\end{proof}

Theorem 2 and Lemma 4 supply enough background material for proving Lemma 3. We shall now proceed to do so.

\begin{proof}[Proof of Lemma 3]
For notational convenience, let $S = \{j_1, ..., j_{|S|}\}$. We begin by demonstrating the compactness of $V'_t(h_t)$ for all $t \leq N$ and $h_t \in H_t$. The proof is by induction on $t$. For $t = N$, let $h_N = (s_1, ..., s_N) \in H_N$. $V'_N(h_N) = \{R_N(s_N)\}$ is closed because it is finite. For any $\pi \in P$, we have $u_N^{\pi}(h_N) = R_N(s_N)$, and therefore $\|u_N^{\pi}(h_N)\|_{\infty} \leq \|R_N(s_N)\|_{\infty}$. We then have that for all $h_N \in H_N$, $V'_N(h_N)$ is closed and bounded, and therefore compact. Suppose $V'_{t+1}(h_{t+1})$ is compact for all $h_{t+1} \in H_{t+1}$, for some $t < N$.

Let $h_t = (s_1, ..., s_t) \in H_t$ and $v = u_{t}^{\pi}(h_{t}) \in V'_t(h_t)$ with $P = (d_1, ..., d_{N-1})$. Then $v = R_t(s_t, d_t(h_t)) + \sum_{j \in S}p_t(j | s_t, d_t(h_t))u_{t+1}^{\pi}(h_t, j)$. By the induction hypothesis, there exists, for each $j \in S$, an $M_j \geq 0$ such that $\|u_{t+1}^{\pi}(h_t, j)\|_{\infty} \leq M_j$. Applying the triangle inequality twice on $\|v\|_{\infty}$, we obtain $\|v\|_{\infty} \leq \|R_t(s_t, d_t(h_t))\|_{\infty} + \sum_{j \in S} M_j$, whence \begin{equation*}\|v\|_{\infty} \leq \max_{1 \leq i \leq m} \max_{a \in A_{s_t}}  |R_t(s_t, a)_i| + \sum_{j \in S} M_j\end{equation*}
\noindent by noticing that for each $1 \leq i \leq m$, $\max_{a \in A_{s_t}}\, |R_t(s_t, a)_i|$ exists because $A_{s_t}$ is compact and $a \mapsto |R_t(s_t, a)_i|$ is continuous on $A_{s_t}$.

Thus $V'_t(h_t)$ is bounded. In order to demonstrate its closedness, let $(v^n)_{n \geq 0}$ be a sequence of vectors in $V'_t(h_t)$ that converges to a $v \in \mathbb{R}^m$. We may write \begin{equation*}v^n = u_t^{\pi_n}(h_t) = R_t(s_t, d_t^{\pi_n}(h_t)) + \sum_{i = 1}^{|S|}p_t(j_i | s_t, d_t^{\pi_n}(h_t))u_{t+1}^{\pi_n}(h_t, j_i)\end{equation*} for some $\pi_n = (d_1^{\pi_n}, ..., d_{N-1}^{\pi_n}) \in P$, for all $n \geq 0$. We endeavor to show that $v \in V'_t(h_t)$. By the hypothesized compactness of the $V'_{t+1}(h_t, j_i)$'s, $i = 1, ..., |S|$, there exist, according to Lemma 4, a strictly increasing map $\phi: \mathbb{N} \rightarrow \mathbb{N}$, and $p_1^{*}$, ..., $p_{|S|}^{*} \in [0, 1]$, and $x_1^{*}, ..., x_{|S|}^{*}$ in $V'_{t+1}(h_t, j_1)$, ..., $V'_{t+1}(h_t, j_{|S|})$ respectively, such that \begin{equation*}u_{t+1}^{\pi_{\phi(n)}}(h_t, j_i) \to x_i^{*}\end{equation*}
\begin{equation*}p_t(j_i | s_t, d_t^{\pi_{\phi(n)}}(h_t)) \to p_i^{*}\end{equation*}
for all $i = 1, ..., |S|$, and therefore $\sum_{i = 1}^{|S|}p_t(j_i | s_t, d_t^{\pi_{\phi(n)}}(h_t))u_{t+1}^{\pi_{\phi(n)}}(h_t, j_i) \to \sum_{i = 1}^{|S|}p_i^{*}x_i^{*}$.

\noindent Focusing now on the $p_i^{*}$'s, observe that since $D_t$ is compact, there exist a $d^{*} \in D_t$ and a strictly increasing $\alpha: \mathbb{N} \rightarrow \mathbb{N}$ such that $d_t^{\pi_{\phi(\alpha(n))}} \to d^{*}$ and, in particular, in view of the adopted topology, $d_t^{\pi_{\phi(\alpha(n))}}(h_t) \to d^{*}(h_t)$. By the assumed continuity of transition probabilities on $A_{s_t}$, it follows that \begin{equation*}p_t(j_i | s_t, d_t^{\pi_{\phi(\alpha(n))}}(h_t)) \to p_t(j_i | s_t, d^{*}(h_t))\end{equation*}
for all $i = 1, ..., |S|$. Moreover, for all $i = 1, ..., |S|$, $(p_t(j_i | s_t, d_t^{\pi_{\phi(\alpha(n))}}(h_t))_{n \geq 0}$ is a subsequence of $(p_t(j_i | s_t, d_t^{\pi_{\phi(n)}}(h_t)))_{n \geq 0}$, from which it follows that $p_i^{*} = p_t(j_i | s_t, d^{*}(h_t))$. To recapitulate, then, we have established that \begin{equation*}\sum_{i = 1}^{|S|}p_t(j_i | s_t, d_t^{\pi_{\phi(\alpha(n))}}(h_t))u_{t+1}^{\pi_{\phi(\alpha(n))}}(h_t, j_i) \to \sum_{i = 1}^{|S|}p_t(j_i | s_t, d^{*}(h_t))x_i^{*}.\end{equation*}

\noindent By the assumed continuity of rewards on $A_{s_t}$, we also have that $R_t(s_t, d_t^{\pi_{\phi(\alpha(n))}}(h_t)) \to R_t(s_t, d^{*}(h_t))$. This means that \begin{multline*}R_t(s_t, d_t^{\pi_{\phi(\alpha(n))}}(h_t)) + \sum_{i = 1}^{|S|}p_t(j_i | s_t, d_t^{\pi_{\phi(\alpha(n))}}(h_t))u_{t+1}^{\pi_{\phi(\alpha(n))}}(h_t, j_i) \\ \to R_t(s_t, d^{*}(h_t)) + \sum_{i = 1}^{|S|}p_t(j_i | s_t, d^{*}(h_t))x_i^{*}\end{multline*}

\noindent We thus have a subsequence of $(v^n)_{n \geq 0}$ that converges to $R_t(s_t, d^{*}(h_t)) + \sum_{i = 1}^{|S|}p_t(j_i | s_t, d^{*}(h_t))x_i^{*}$. Hence $v = R_t(s_t, d^{*}(h_t)) + \sum_{i = 1}^{|S|}p_t(j_i | s_t, d^{*}(h_t))x_i^{*}$. Recalling that each $x_i^{*}$ is in $V'_t(h_t, j_i)$, we know from Proposition 4 that there is a $\pi \in P$ such that $v = u_t^{\pi}(h_t)$. Thus $v \in V'_t(h_t)$. In conclusion, $V'_t(h_t)$ is closed and bounded, and therefore compact.

Finally, for all $t \leq N$, $h_t \in H_t$ and $u \in V'_t(h_t)$, that $S(u) = \{v \in V'_t(h_t) \mid\, v \geq u\}$ is closed follows immediately from the closedness of $V'_t(h_t)$ and the fact that if a convergent sequence $(v^n)_{n \geq 0}$ in $\mathbb{R}^m$ satisfies $v^n \geq u$ for all $n \geq 0$, then its limit $v$ also satisfies $v \geq u$ by the definition of $\geq$.

By Theorem 4, Lemma 3 is true.
\end{proof}

\begin{proposition}
(P') implies $U'_t(h_t) = e(V'_t(h_t))$ for all $t \leq N$ and $h_t \in H_t$.
\end{proposition}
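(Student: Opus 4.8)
The plan is to transcribe, essentially verbatim, the proof of Proposition 1, replacing states $s$ by histories $h_t$, the sets $V_t(s), U_t(s), F_t(s)$ by $V'_t(h_t), U'_t(h_t), F'_t(h_t)$, the Markovian recursion (2) by the history-dependent recursion (20), and Lemma 1 by its history analogue Lemma 3. I would argue by downward induction on $t$. The base case $t = N$ needs nothing from (P'): for every $h_N = (s_1,\dots,s_N)$ one has $V'_N(h_N) = \{R_N(s_N)\} = U'_N(h_N)$ by (26), and a singleton is its own efficient subset, so $U'_N(h_N) = e(V'_N(h_N))$.

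For the inductive step I would assume $U'_{t+1}(h_{t+1}) = e(V'_{t+1}(h_{t+1}))$ for all $h_{t+1} \in H_{t+1}$, fix $h_t = (s_1,\dots,s_t)$, and establish the two inclusions. For $e(V'_t(h_t)) \subseteq U'_t(h_t)$: given $v \in e(V'_t(h_t))$, use (20) to pick a policy $\pi$ and write, with $a = d_t(h_t)$, $v = R_t(s_t,a) + \sum_{j \in S} p_t(j \mid s_t, a)\, u^{\pi}_{t+1}(h_t, j)$; for each $j$, Lemma 3 supplies $\pi_j \in P$ with $u^{\pi_j}_{t+1}(h_t, j) \ge u^{\pi}_{t+1}(h_t, j)$ and $u^{\pi_j}_{t+1}(h_t, j) \in e(V'_{t+1}(h_t, j))$; set $w = R_t(s_t,a) + \sum_{j \in S} p_t(j \mid s_t, a)\, u^{\pi_j}_{t+1}(h_t, j)$. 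The induction hypothesis gives $u^{\pi_j}_{t+1}(h_t, j) \in U'_{t+1}(h_t, j)$ for each $j$, so $w \in F'_t(h_t)$ by the definition (24); Lemma 2 gives $w \ge v$; and (P') gives $w \in V'_t(h_t)$. Efficiency of $v$ in $V'_t(h_t)$ then forces $w = v$, so $v \in F'_t(h_t)$; and since (P') yields $F'_t(h_t) \subseteq V'_t(h_t)$ and $v$ is efficient in the larger set, it is efficient in $F'_t(h_t)$ by the definition (4) of the efficient subset, i.e. $v \in e(F'_t(h_t)) = U'_t(h_t)$.

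For the reverse inclusion $U'_t(h_t) \subseteq e(V'_t(h_t))$: given $u \in U'_t(h_t) = e(F'_t(h_t))$, write $u = R_t(s_t,a) + \sum_{j \in S} p_t(j \mid s_t, a)\, v_j$ with $v_j \in U'_{t+1}(h_t, j)$, from the definition (24) of $F'_t$, so that $u \in V'_t(h_t)$ by (P'). If $u$ were not efficient in $V'_t(h_t)$, Lemma 3 would produce $v \in e(V'_t(h_t))$ with $v \ge u$ and $v \ne u$; by the inclusion just proved, $v \in U'_t(h_t) = e(F'_t(h_t)) \subseteq F'_t(h_t)$, contradicting $u \in e(F'_t(h_t))$. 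Hence $u \in e(V'_t(h_t))$, which closes the induction.

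I do not expect a genuinely new obstacle here: all the analytic content — the compactness of the $V'_t(h_t)$, and hence the existence of a kernel/Lemma 1 analogue over histories — has already been isolated in Lemma 3 (which leans on Theorem 2 and Lemma 4), and the construction of the non-Markovian policies realizing sums of the form (22) has been isolated in Proposition 4. The single point that still needs a moment's care is the passage ``$v$ efficient in $V'_t(h_t)$ and $v \in F'_t(h_t)$ imply $v$ efficient in $F'_t(h_t)$'', which is immediate from the definition (4) once (P') delivers $F'_t(h_t) \subseteq V'_t(h_t)$ — precisely the place where White's original argument, lacking (P)/(P'), broke down.
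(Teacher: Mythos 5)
Your proposal is correct and follows essentially the same route as the paper's own proof: downward induction on $t$, with the inclusion $e(V'_t(h_t)) \subseteq U'_t(h_t)$ obtained via Lemma 3, Lemma 2 and (P'), and the converse by contradiction using Lemma 3 together with the first inclusion. (Incidentally, you cite Lemma 3 where the paper's text mistakenly says ``Lemma 1'' in the converse step; your reading is the intended one.)
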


\begin{proof}
Suppose (P') is true. For $t = N$, for all $h_N = (s_1, ..., s_N) \in H_N$, $U'_N(h_N) = \{R_N(s_N)\}$ and $V'_N(h_N) = \{R_N(s_N)\}$; thus, $U'_N(h_N) = e(V'_N(h_N))$. Let $t < N-1$ and assume $U'_{t+1}(h_{t+1}) = e(V'_{t+1}(h_{t+1})$ for all $h_{t+1} \in H_{t+1}$.

Let $h_t = (s_1, ..., s_t) \in H_t$ and $v \in e(V'_t(h_t))$. There exists a $\pi = (d_1, ..., d_{N-1}) \in P$ such that $v = R_t(s_t, d_{t}(h_t)) + \sum_{j \in S}p_t(j | s_t, d_{t}(h_t))u_{t+1}^{\pi}(h_t, j)$. For all $j \in S$, $(h_t, j) \in H_{t+1}$, and by Lemma 3 there exists a $v_j \in e(V'_{t+1}(h_t, j))$ such that $v_j \geq u_{t+1}^{\pi}(h_t, j)$. Let $w = R_t(s_t, d_{t}(h_t)) + \sum_{j \in S}p_t(j | s_t, d_{t}(h_t))v_j$. By Lemma 2, $w \geq v$, and by the induction hypothesis together with (P') we have that $w \in V'_t(h_t)$. However, $v$ being efficient in $V'_t(h_t)$, $v = w$. Hence, again by the induction hypothesis, $v \in F'_t(h_t)$. By (P'), $F'_t(h_t) \subseteq V'_t(h_t)$, and thus $v$ is also efficient in $F'_t(h_t)$, i.e $v \in U'_t(h_t)$. From this it follows that $e(V'_t(h_t)) \subseteq U'_t(h_t)$.

Now we shall demonstrate the converse inclusion. Let $u \in U'_t(h_t)$.  There exists an $a \in A_{s_t}$ such that $u = R_t(s_t, a) + \sum_{j \in S}p_t(j | s_t, a)v_j$ for some $v_j \in U'_{t+1}(h_t, j)$ for all $j \in S$. By (P'), $u \in V'_t(h_t)$. Assuming towards a contradiction that $u \notin e(V'_t(h_t))$, there exists, applying Lemma 1, $v \in e(V'_t(h_t)) \subseteq U'_t(h_t)$ such that $v \geq u$ and $v \neq u$. This contradicts the fact that $u \in U'_t(h_t)$, because $U'_t(h_t)$ is an efficient set, and shows that $u \in e(V'_t(h_t))$. Consequently, $U'_t(h_t) \subseteq e(V'_t(h_t))$.

Finally, for all $t \leq N$, for all $h_t \in H_t$, $U'_t(h_t) = e(V'_t(h_t))$.

\end{proof}

We are now able to substantiate (P').

\begin{theorem}
(P') is true, i.e \begin{equation*}\forall t < N,\ \forall h_t \in H_t,\ F'_t(h_t) \subseteq V'_t(h_t)\end{equation*}
\end{theorem}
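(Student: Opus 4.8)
The plan is to run precisely the argument used for the deterministic case in Section 4, but with the elementary ``prescribe-then-follow'' construction replaced by the branching construction of Proposition 4, which is exactly what the enlarged policy space $P$ buys us. Concretely, I would induct on $t$, moving \emph{downward} from $t = N-1$ to $t = 1$, with induction hypothesis ``$F'_{t+1}(h_{t+1}) \subseteq V'_{t+1}(h_{t+1})$ for all $h_{t+1} \in H_{t+1}$''.

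For the base case $t = N-1$: fix $h_{N-1} = (s_1, \dots, s_{N-1})$ and $w \in F'_{N-1}(h_{N-1})$. Using the boundary condition $U'_N(\cdot) = \{R_N(s_N)\}$, unfolding the definition of $F'_{N-1}$ produces an action $a \in A_{s_{N-1}}$ with $w = R_{N-1}(s_{N-1}, a) + \sum_{j \in S} p_{N-1}(j \mid s_{N-1}, a) R_N(j)$; any policy $\pi \in P$ whose $(N-1)$-decision rule maps $h_{N-1}$ to $a$ then satisfies $u_{N-1}^{\pi}(h_{N-1}) = w$ by the recursion for $u^{\pi}$, so $w \in V'_{N-1}(h_{N-1})$.

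For the inductive step, fix $t < N-1$ and assume the hypothesis at $t+1$. Since $e(X) \subseteq X$ for every $X$, the hypothesis upgrades to $U'_{t+1}(h_{t+1}) = e(F'_{t+1}(h_{t+1})) \subseteq F'_{t+1}(h_{t+1}) \subseteq V'_{t+1}(h_{t+1})$ for all $h_{t+1}$. Now take $h_t = (s_1, \dots, s_t)$ and $w \in F'_t(h_t)$. By definition of $F'_t$, there are an $a \in A_{s_t}$ and, for each $j \in S$, a $v_j \in U'_{t+1}(h_t, j) \subseteq V'_{t+1}(h_t, j)$ with $w = R_t(s_t, a) + \sum_{j \in S} p_t(j \mid s_t, a) v_j$; choose for each $j$ a policy $\pi_j \in P$ with $u_{t+1}^{\pi_j}(h_t, j) = v_j$. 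Then $w = R_t(s_t, a) + \sum_{j \in S} p_t(j \mid s_t, a) u_{t+1}^{\pi_j}(h_t, j)$, which is exactly the vector $w$ in the statement of Proposition 4; that proposition hands us a policy $\pi \in P$ with $u_t^{\pi}(h_t) = w$, whence $w \in V'_t(h_t)$. Since $h_t$ was arbitrary, $F'_t(h_t) \subseteq V'_t(h_t)$ for all $h_t \in H_t$, which closes the induction and establishes (P').

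There is essentially no obstacle left: the genuinely delicate work — constructing, within $P$, a single policy whose return is the ``mixed'' vector $w$, and verifying that its decision rules are well defined — has already been discharged in Proposition 4, which is legitimate precisely because each $D_t$ is the set of \emph{all} maps $H_t \to A$, so the branching rules it uses exist. The only points to handle with care are bookkeeping ones: starting the induction at $t = N-1$ rather than $t = N$ so that the singleton terminal values $U'_N$ enter correctly, and remembering to pass from $U'_{t+1} = e(F'_{t+1})$ to $U'_{t+1} \subseteq V'_{t+1}$ via $e(X) \subseteq X$ before invoking the induction hypothesis. Note that Lemma 3, the compactness machinery, and Lemma 2 are not needed for this theorem — they are required for Propositions 5 and 6, not for (P') itself.
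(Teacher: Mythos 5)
Your proposal is correct and follows essentially the same route as the paper's proof: a downward induction on $t$ with base case $t = N-1$, the inclusion $U'_{t+1}(h_t,j) = e(F'_{t+1}(h_t,j)) \subseteq F'_{t+1}(h_t,j) \subseteq V'_{t+1}(h_t,j)$ to extract policies $\pi_j$ realizing the $v_j$, and an appeal to Proposition 4 to assemble a single policy in $P$ achieving $w$. Your closing remark that Lemmas 2--4 and the compactness machinery are not needed here is also consistent with the paper, which reserves them for Proposition 5 and Lemma 3.
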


\begin{proof}
We proceed by induction on $t$. Let $h_{N-1} = (s_1, ..., s_{N-1}) \in H_{N-1}$, and let $f = R_{N-1}(s_{N-1}, a) + \sum_{j \in S}p_{N-1}(j | s_{N-1}, a)R_N(j) \in F'_{N-1}(h_{N-1})$, for some $a \in A_{s_{N-1}}$. Then $f = u_{N-1}^{\pi}(h_{N-1}) \in V'_{N-1}(h_{N-1})$ for any policy $\pi \in P$ that prescribes $a$ at time $N-1$ when the history at that time is $h_{N-1}$. Consequently, $F'_{N-1}(h_{N-1}) \subseteq V'_{N-1}(h_{N-1})$ for all $h_{N-1} \in H_{N-1}$.

Let us assume now that for some $t < N-1$, $F'_{t+1}(h_{t+1}) \subseteq V'_{t+1}(h_{t+1})$ for all $h_{t+1} \in H_{t+1}$. Let $h_t = (s_1, ..., s_t) \in H_t$, and let $f \in F'_t(h_t)$. There exist an $a \in A_{s_t}$ and, for each $j \in S$, a $v_j \in U'_{t+1}(h_t, j)$ such that $f = R_t(s_t, a) + \sum_{j \in S}p_{t}(j | s_{t}, a)v_j$. By our induction hypothesis, $U'_{t+1}(h_t, j) = e(F'_{t+1}(h_t, j)) \subseteq V'_{t+1}(h_t, j)$ for all $j \in S$, so that each $v_j$ can be written $v_j = u_{t+1}^{\pi_j}(h_t, j)$ for some policy $\pi_j \in P$. Thus, \begin{equation}f = R_t(s_t, a) + \sum_{j \in S}p_{t}(j | s_{t}, a)u_{t+1}^{\pi_j}(h_t, j)\end{equation} 

By virtue of Proposition 4, there exists a policy $\pi \in P$ such that $f = u_t^{\pi}(h_t)$. Thus, $f \in V'_t(h_t)$. As a result, $F'_t(h_t) \subseteq V'_t(h_t)$. In sum, we have shown by induction that for all $t < N$, $F'_t(h_t) \subseteq V'_t(h_t)$ for all $h_t \in H_t$.  
\end{proof}

Combining Theorem 3 with Proposition 5, we obtain the following relation between the solutions to Equations (23) and (24) and the efficient sets of policy returns over $P$, the $e(V'_t(h_t))$'s:

\begin{theorem}
For all \(t \leq N\) and \(h_t = (s_1, ..., s_t) \in H_t\), \(e(V'_t(h_t))\) is the unique solution \(U'_t(h_t)\) to either of the following equations:

\begin{equation*}
U'_t(h_t) = e\Biggl(\bigcup_{a \in A_{s_t}}\biggl(\{R_t(s_t, a)\} \bigoplus \sum_{j \in S}p_t(j | s_t, a)U'_{t + 1}(h_t, j)\biggr)\Biggr); \begin{array}{l}
t < N\\
\end{array}
\end{equation*}
\begin{equation*}
U'_t(h_t) = \{R_N(s_t)\}; \begin{array}{l}
t = N\\
\end{array}
\end{equation*}
\end{theorem}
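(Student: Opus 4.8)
The plan is to split the statement into an existence half and a uniqueness half, and to observe that the existence half is immediate from results already established. For existence, recall that the family $(U'_t)_{t \le N}$ introduced just above Proposition~4 --- defined by $U'_t(h_t) = e(F'_t(h_t))$ for $t < N$ with the boundary condition $U'_N(h_N) = \{R_N(s_N)\}$, where $F'_t(h_t) = \bigcup_{a \in A_{s_t}}\bigl(\{R_t(s_t, a)\} \bigoplus \sum_{j \in S} p_t(j | s_t, a) U'_{t+1}(h_t, j)\bigr)$ --- is, \emph{by construction}, a solution of the two displayed equations: the first equation is nothing but the identity $U'_t(h_t) = e(F'_t(h_t))$ with the definition of $F'_t(h_t)$ substituted in, and the second is the boundary condition. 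It then suffices to identify this family with the efficient sets $e(V'_t(h_t))$. Theorem~3 asserts that (P') holds, and Proposition~5 asserts that under (P') one has $U'_t(h_t) = e(V'_t(h_t))$ for all $t \le N$ and $h_t \in H_t$; hence $e(V'_t(h_t))$ is a solution.

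For uniqueness I would show, by backward induction on $t$, that the equations pin down their solution completely. Let $(W'_t)_{t \le N}$ be any family of subsets of $\mathbb{R}^m$, indexed by the pairs $(t, h_t)$ with $h_t \in H_t$, that satisfies both displayed equations. At $t = N$ the second equation forces $W'_N(h_N) = \{R_N(s_N)\} = U'_N(h_N)$ for every $h_N \in H_N$. Assume now, for some $t < N$, that $W'_{t+1}(h_{t+1}) = U'_{t+1}(h_{t+1})$ for every $h_{t+1} \in H_{t+1}$. Fixing $h_t = (s_1, \dots, s_t) \in H_t$, the right-hand side of the first equation at $(t, h_t)$ involves the unknown family only through the sets $W'_{t+1}(h_t, j)$, $j \in S$, each of which equals $U'_{t+1}(h_t, j)$ by the induction hypothesis; therefore the two right-hand sides are literally the same set, and applying $e(\cdot)$ gives $W'_t(h_t) = U'_t(h_t)$. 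Since $h_t$ was arbitrary, the induction closes, so the solution is unique; combining this with the existence half yields that $e(V'_t(h_t))$ is that unique solution.

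I do not expect a genuine obstacle here: the theorem is essentially a packaging of Theorem~3 and Proposition~5 together with the elementary observation that the recursion defining the $U'_t$ is explicit --- each $U'_t$ depends only on the next-stage family $U'_{t+1}$ --- which makes the backward induction for uniqueness routine. The only points demanding a little care are to state precisely that a ``solution'' means a family indexed by \emph{all} admissible $(t, h_t)$, so that the induction on $t$ is well posed, and, when unfolding $F'_t(h_t)$, to check that the parenthesised expression in the first displayed equation matches the definition of $F'_t(h_t)$ verbatim, so that the family above is an exact and not merely an approximate solution. Both are immediate from the definitions preceding Proposition~4.
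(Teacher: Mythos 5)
Your proposal is correct and follows essentially the same route as the paper, which derives this theorem simply by combining Theorem~3 (which establishes (P')) with Proposition~5 (which shows that (P') forces $U'_t(h_t) = e(V'_t(h_t))$), leaving the fact that the recursively defined family is a solution by construction, and the backward-induction uniqueness argument, implicit. Your explicit treatment of uniqueness is a harmless (and arguably welcome) elaboration of what the paper takes for granted.
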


The equations can be simplified by noticing that $U'_t(h_t)$ depends on $h_t$ only through $s_t$. Because these and White's equations share the same boundary condition at $t = N$, this has the important implication that for all $t \leq N$ and $h_t = (s_1, ..., s_t) \in H_t$, $U'_t(h_t)$ is none other than $U_t(s_t)$.

\begin{proposition}
For all $t \leq N$, for all $h_t = (s_1, ..., s_t) \in H_t$, $U'_t(h_t) = U_t(s_t)$. 
\end{proposition}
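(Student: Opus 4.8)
The plan is to run a downward induction on $t$, from $t=N$ to $t=1$. The two families $(U_t(s))_{t\le N,\,s\in S}$ and $(U'_t(h_t))_{t\le N,\,h_t\in H_t}$ are each \emph{defined} by a recursion: $U_t$ by White's equations together with the terminal condition $U_N(s)=\{R_N(s)\}$, and $U'_t$ by the set-valued recursion through $F'_t$ together with the terminal condition $U'_N(h_N)=\{R_N(s_N)\}$; so both objects are unambiguously computed stage by stage, and there is something concrete to compare at each $t$. The statement I will actually prove by induction is the slightly stronger assertion that \emph{$U'_t(h_t)$ depends on $h_t$ only through its last coordinate $s_t$, and equals $U_t(s_t)$}; carrying the ``depends only on the last state'' clause inside the induction hypothesis is what makes the step go through.

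For the base case $t=N$, both $U'_N(h_N)$ and $U_N(s_N)$ are equal to $\{R_N(s_N)\}$ by their respective terminal conditions, so the claim holds and in particular $U'_N(h_N)$ depends on $h_N$ only through $s_N$. For the inductive step, fix $t<N$ and suppose that for every $h_{t+1}=(s_1,\dots,s_{t+1})\in H_{t+1}$ we have $U'_{t+1}(h_{t+1})=U_{t+1}(s_{t+1})$. Let $h_t=(s_1,\dots,s_t)\in H_t$. For each $j\in S$ the history $(h_t,j)=(s_1,\dots,s_t,j)$ lies in $H_{t+1}$ and has last state $j$, so the induction hypothesis yields $U'_{t+1}(h_t,j)=U_{t+1}(j)$, independently of the prefix $h_t$. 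Substituting into the recursion defining $U'_t(h_t)$,
\begin{multline*}
U'_t(h_t) = e\Biggl(\bigcup_{a \in A_{s_t}}\biggl(\{R_t(s_t, a)\} \bigoplus \sum_{j \in S}p_t(j \mid s_t, a)\,U'_{t+1}(h_t, j)\biggr)\Biggr) \\
= e\Biggl(\bigcup_{a \in A_{s_t}}\biggl(\{R_t(s_t, a)\} \bigoplus \sum_{j \in S}p_t(j \mid s_t, a)\,U_{t+1}(j)\biggr)\Biggr),
\end{multline*}
and the right-hand side is exactly White's equation evaluated at the state $s_t$, i.e.\ it is $U_t(s_t)$. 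Hence $U'_t(h_t)=U_t(s_t)$, which in particular depends on $h_t$ only through $s_t$. This closes the induction and gives the proposition for all $t\le N$ and all $h_t\in H_t$.

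I do not expect any genuine obstacle here: the argument is a direct verification that two recursions with the same right-hand side and the same terminal data produce the same output. The only point demanding a little care is the form of the induction hypothesis noted above — one must know that $U'_{t+1}(h_t,j)$ coincides with $U_{t+1}(j)$ for \emph{every} common prefix $h_t$, since it is this uniformity that lets the substitution be performed term by term in the union over $a$ and the weighted sum over $j$. (A cosmetic alternative would be to first establish, by the same induction, that $h_t\mapsto U'_t(h_t)$ factors through $h_t\mapsto s_t$, define $\widetilde U_t(s):=U'_t(h_t)$ for any $h_t$ with last state $s$, observe that $\widetilde U_t$ satisfies White's equations and terminal condition, and then invoke uniqueness of the solution to those equations to conclude $\widetilde U_t=U_t$; this routes through Proposition~1's framework but is not needed.)
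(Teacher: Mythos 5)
Your proof is correct and follows essentially the same route as the paper: a downward induction on $t$ with base case $U'_N(h_N)=\{R_N(s_N)\}=U_N(s_N)$, followed by substituting $U'_{t+1}(h_t,j)=U_{t+1}(j)$ into the recursion defining $U'_t(h_t)$ to recover White's equation at $s_t$. The extra ``depends only on the last state'' clause you carry in the induction hypothesis is already implicit in the paper's hypothesis (since $U_{t+1}(s_{t+1})$ is by definition a function of $s_{t+1}$ alone), so there is no substantive difference.
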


\begin{proof}
For $t = N$, we have that for all $h_N = (s_1, ..., s_N) \in H_N$, $U'_N(h_N) = \{R_N(s_N)\} = U_N(s_N)$. The property holds for $t = N$. Assume that for some $t < N$, $U'_{t+1}(h_{t+1}) = U_{t+1}(s_{t+1})$ for all $h_{t+1} = (s_1, ..., s_{t+1}) \in H_{t+1}$. For all $h_t = (s_1, ..., s_t) \in H_t$, we have by definition, \begin{equation}U'_t(h_t) = e\Biggl(\bigcup_{a \in A_{s_t}}\biggl(\{R_t(s_t, a)\} \bigoplus \sum_{j \in S}p_t(j | s_t, a)U'_{t + 1}(h_t, j)\biggr)\Biggr)\end{equation}
hence
\begin{equation}U'_t(h_t) = e\Biggl(\bigcup_{a \in A_{s_t}}\biggl(\{R_t(s_t, a)\} \bigoplus \sum_{j \in S}p_t(j | s_t, a)U_{t + 1}(j)\biggr)\Biggr)
\end{equation}
by the induction hypothesis. Thus $U'_t(h_t) = U_t(s_t)$. The proposition follows.
\end{proof}

Applying Proposition 6 and Theorem 4, we discover that the solutions to the White equations at $t \leq N$ are the $e(V'_t(h_t))$'s, $h_t \in H_t$.

\begin{theorem}
For all $t \leq N$, for all $h_t = (s_1, ..., s_t) \in H_t$, $U_t(s_t) = e(V'_t(h_t))$.
\end{theorem}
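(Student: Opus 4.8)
The plan is to obtain Theorem 5 as an immediate consequence of Proposition 6 together with Theorem 4 (equivalently, with Theorem 3 and Proposition 5), since all the substantive work has already been carried out. First I would fix an arbitrary $t \leq N$ and an arbitrary history $h_t = (s_1, ..., s_t) \in H_t$. Proposition 6 gives $U_t(s_t) = U'_t(h_t)$, where $U_t$ is the solution of White's original equations (4)--(5) and $U'_t(h_t)$ is the solution of the White equations in the history-dependent framework.

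Next I would recall that (P') holds: by Theorem 3, $F'_t(h_t) \subseteq V'_t(h_t)$ for every $t < N$ and every $h_t \in H_t$. Feeding this into Proposition 5 yields $U'_t(h_t) = e(V'_t(h_t))$ for all $t \leq N$ and all $h_t \in H_t$ --- which is precisely the content of Theorem 4. Combining the two identities gives $U_t(s_t) = U'_t(h_t) = e(V'_t(h_t))$, and since $t$ and $h_t$ were arbitrary, the claim follows.

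The only point requiring a little care is the bookkeeping of which set each symbol denotes: one must check that the $U'_t(h_t)$ appearing in Proposition 6 is literally the same set as the one appearing in Proposition 5 and Theorem 4. This holds because in all three places $U'_t(h_t)$ is, by definition, $e(F'_t(h_t))$ for $t < N$ with boundary value $\{R_N(s_t)\}$ at $t = N$, i.e.\ the unique solution of the primed White equations; likewise $U_t(s_t)$ in Proposition 6 is White's own $U_t$, defined by (4)--(5). There is no genuine obstacle here --- Theorem 5 is essentially a corollary that repackages the preceding results. Its interest is conceptual rather than technical: it identifies the solutions of White's (unmodified) equations with the Pareto efficient sets of policy returns computed over the enlarged policy space $P$, which includes the non-Markovian policies constructed in Proposition 4.
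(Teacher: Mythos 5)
Your proposal is correct and follows exactly the paper's route: Theorem 5 is obtained by chaining Proposition 6 ($U_t(s_t) = U'_t(h_t)$) with Theorem 4 ($U'_t(h_t) = e(V'_t(h_t))$, itself a consequence of Theorem 3 and Proposition 5). The bookkeeping remark about the identity of the sets $U'_t(h_t)$ across statements is a sensible, if implicit, check that the paper also relies on.
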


An interesting byproduct of Theorem 5 and Corollary 2 is that under the determinism assumption defined in Section 3, all efficient policy returns over $P$ are attained by policies in $P_W \subset P$. Hence, given the same initial state, an optimal policy in $P_W$ can accrue as ``large" an expected reward as an optimal policy in the whole of $P$. Rather than consider the full range of policies in $P$, the decision maker is therefore justified in focusing only on Markovian policies, i.e\ $P_W$, which are easier to implement and evaluate \citep{puterman1994}. 

\begin{corollary}
Suppose Assumption 1 is satisfied. Then, for all $s \in S$, \begin{equation*}e\Biggl(\bigcup_{\pi \in P_W} \{ u_1^{\pi}(s) \}\Biggr) = e\Biggl(\bigcup_{\pi \in P} \{ u_1^{\pi}(s) \}\Biggr)\end{equation*}
\end{corollary}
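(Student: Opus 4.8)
The plan is to observe that the two sets appearing in the statement are nothing but $V_1(s)$ and $V'_1(s)$, and then to squeeze their efficient subsets together through the common object $U_1(s)$, using two results already established.

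First I would unwind the definitions. By the definition of $V_t$ given in the Introduction, $\bigcup_{\pi \in P_W}\{u_1^{\pi}(s)\} = V_1(s)$; by the definition of $V'_t$ given in Section 5, $\bigcup_{\pi \in P}\{u_1^{\pi}(s)\} = V'_1(s)$. Moreover $H_1 = S$, so the history at time $1$ is just the state, and for $h_1 = s$ we have $V'_1(h_1) = V'_1(s)$. Thus the claimed identity is exactly $e(V_1(s)) = e(V'_1(s))$.

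Next I would invoke the two key facts. Under Assumption 1, Corollary 2 gives $U_1(s) = e(V_1(s))$. Independently — and without any determinism hypothesis — Theorem 5 gives $U_1(s) = e(V'_1(h_1)) = e(V'_1(s))$. Chaining these two equalities yields $e(V_1(s)) = U_1(s) = e(V'_1(s))$, which is precisely the asserted equality; since $s \in S$ was arbitrary, the corollary follows.

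There is no genuine obstacle here: the substance has already been absorbed into Corollary 2 (validity of the White equations under Assumption 1) and into Theorem 5 (the general identification of White's solutions with the refined efficient sets $e(V'_t(h_t))$). The only point requiring a little care is the bookkeeping at $t = 1$ — recognising that $h_1$ and $s$ may be identified, and that $V_1(s)$ and $V'_1(s)$ are literally the two unions displayed in the statement — and the observation that Theorem 5, unlike Corollary 2, does not presuppose Assumption 1, so that no hypothesis beyond determinism is smuggled in.
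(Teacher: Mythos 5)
Your proof is correct and follows exactly the route the paper intends: the corollary is presented as an immediate byproduct of Corollary 2 (which gives $U_1(s) = e(V_1(s))$ under Assumption 1) and Theorem 5 (which gives $U_1(s) = e(V'_1(h_1))$ with $h_1 = s$), chained through the common object $U_1(s)$. Your additional bookkeeping about identifying $h_1$ with $s$ and noting that Theorem 5 needs no determinism hypothesis is accurate and matches the paper's framework.
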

\ \\
\section{An Example}
In this section we report the results of experimental tests of Corollary 2 (Sec. 4), Theorem 5 (Sec. 5) and Corollary 3 (Sec. 5). To check Theorem 5, we compared $U_1(s_1)$ of the model in Section 2 with $e(V'_1(h_1)) = e(V'_1(s_1))$. Recall that this model furnished a counterexample to White's ``theorem", since \begin{multline}
U_1(s_1) = \{(30.3, -9.0), (30.0, -7.8), (29.0, -3.3), (28.7, -2), (27.6, 0.4), (27.3, 1.6), (26.8, 4.2),\\ (26.5, 5.5), (25.8, 6.6), (25.6, 7.9), (25.0, 10.5), (24.1, 12.9), (23.5, 15.5)\}
\end{multline}
yet $e(V_1(s_1)) \neq U_1(s_1)$.

A full search that was performed over the 16\,384 policies in $P$ yielded \begin{multline}
e(V'_1(s_1)) = \{(30.3, -9.0), (30.0, -7.8), (29.0, -3.3), (28.7, -2), (27.6, 0.4), (27.3, 1.6), (26.8, 4.2),\\ (26.5, 5.5), (25.8, 6.6), (25.6, 7.9), (25.0, 10.5), (24.1, 12.9), (23.5, 15.5)\}
\end{multline}
Thus $U_1(s_1) = e(V'_1(s_1))$, a fact in accord with Theorem 5.

As a test of Corollary 2, we next imposed deterministic state transitions on this model, letting $p_t(s_1 | s_1, a_1) = p_t(s_2 | s_1, a_2) = p_t(s_1 | s_2, a_1) = p_t(s_2 | s_2, a_2) = 1$ for all $t < N = 4$. The rewards were unchanged from Section 2. By evaluating Equations (4) and (5) backward in time, we obtained \begin{equation}U_1(s_1) = \{(34, -15), (26, 5), (31, -4), (23, 16)\}\end{equation}
and a full search of the 64 policies in $P_W$ confirmed $e(V_1(s_1)) = U_1(s_1)$.

To support Corollary 3, we calculated $e(V'_1(s_1))$ in this deterministic setting with a view to comparing it with $e(V_1(s_1))$. Through a comprehensive search of $P$, we found \begin{equation}e(V'_1(s_1)) = \{(34, -15), (26, 5), (31, -4), (23, 16)\} = e(V_1(s_1))\end{equation}
which corroborates Corollary 3.
\ \\
\section{Conclusion}

The aims of this paper were (a) to show that the hypotheses underlying the vector extension of the Bellman equations of a Markov decision process do not ensure the extension's validity, and (b) to propose alternative hypotheses that do. A counterexample to the ``theorem" on which this extension is predicated was provided, and an explanation as to why the ``theorem" failed was advanced. It was found that the ``theorem" holds when (1) the decision making horizon, $N$, spans two or three time steps, or when (2) only deterministic state transitions are permitted, or when (3) the policy space is enlarged from just Markovian policies to include non-Markovian ones.

Not covered in this paper was a fourth area in which the extension is valid, namely a combination of determinism (Assumption 1) and that $\geq$ be any relationship on $\mathbb{R}^m$ that meets either of the following sets of conditions:

\begin{enumerate}
\item $\geq$ is transitive;
\item for all $t \leq N$, $s \in S$, and $u \in V_t(s)$, the set $S(u) = \{v \in V_t(s) \mid,\ v \geq u\}$ is closed;
\item for all $x, y, z \in \mathbb{R}^m$, $x \geq y \implies (x+z) \geq (y+z)$;
\item for all $x \in \mathbb{R}^m$, for all nonnegative reals $a$, $x \geq y \implies ax \geq ay$;
\end{enumerate}

or

\begin{enumerate}
\item $\geq$ is a partial order;
\item for all $t \leq N$, $s \in S$, every totally ordered subset (chain) of $V_t(s)$ admits an upper bound in $V_t(s)$ with respect to $\geq$;
\item for all $x, y, z \in \mathbb{R}^m$, $x \geq y \implies (x+z) \geq (y+z)$;
\item and for all $x \in \mathbb{R}^m$, for all nonnegative reals $a$, $x \geq y \implies ax \geq ay$.
\end{enumerate}

In the remainder of the discussion, we shall refer to the first set of conditions as (C1), and to the second as (C2). Some motivation for (C1) and (C2) follows. In our proof of the proposition that (P) implies $U_t(s) = e(V_t(s))$ for all $t$ and $s$, we appealed principally to two facts concerning $\geq$: Lemma 1 and Lemma 2. Here Lemma 2 follows from conditions (3) and (4) in both of (C1) and (C2). Lemma 1 originally appeared in \cite{white1982}, and is an application of a general theorem published years earlier in \cite{white1977kernels}. The theorem states that if $R$ is a transitive relationship on a compact set $X$, and $S(x) = \{y \in X \mid,\ yRx\}$ is closed for all $x \in X$, then for all $x \in X$, there exists $y \in e(X)$ with $yRx$, where $e(X)$ is defined with respect to $R$. From this follows Lemma 1 under (C1), because the compactness of $V_t(s)$ for all $t$ and $s$ is a fact independent of $\geq$ \citep{white1982}. As to why it follows under (C2) as well, recall that by Zorn's lemma \citep{zorn1935remark}, a partially ordered set $X$ in which every chain is upper bounded in $X$ has at least a maximal element, i.e\ $e(X) \neq \emptyset$. Under (C2), it is straightforward to show that every chain in $V_t(s) \cap S(u)$ is upper bounded in $V_t(s) \cap S(u)$, for all $t$, $s$ and $u \in V_t(s)$. Thus $e(V_t(s) \cap S(u)) \neq \emptyset$. Furthermore, as can easily be checked, $e(V_t(s) \cap S(u)) \subseteq e(V_t(s)) \cap S(u)$, and thus $e(V_t(s)) \cap S(u) \neq \emptyset$, hence Lemma 1. From this we conclude that Proposition 1 holds. At this stage we need only prove that (P) is true, but this is a direct consequence of Assumption 1. Therefore, the extension is valid under Assumption 1 coupled with (C1) or (C2).
\\
\section*{Acknowledgment}
The author is indebted to Dominikus Noll for stimulating conversations that gave impetus to the section pertaining to the deterministic case. The author also thanks Eilon Solan for comments on an early version of this paper, and Slim Kammoun for assistance with certain mathematical aspects of the final section.
\\
\section*{Statements and Declarations}
The author states that he has no conflict of interest.
$\,$

$\,$

\end{document}